\tikzstyle{noeud}=[circle,inner sep=2, minimum size =3 pt, line width = 1pt, draw=black, fill=white]
\definecolor{bleu}{rgb}{0.38, 0.31, 0.86}
\definecolor{rouge}{RGB}{162, 27, 43}
\newskip\@bigflushglue \@bigflushglue = -100pt plus 1fil
\def\bigcentering{\let\\\@centercr\rightskip\@bigflushglue%
\leftskip\@bigflushglue
\parindent\z@\parfillskip\z@skip}
\newcounter{NoTableEntry}
\renewcommand*{\theNoTableEntry}{NTE-\the\value{NoTableEntry}}
\newcommand*{\notableentry}{%
	\multicolumn{1}{@{}c@{}|}{%
		\stepcounter{NoTableEntry}%
		\vadjust pre{\zsavepos{\theNoTableEntry t}}
		\vadjust{\zsavepos{\theNoTableEntry b}}
		\zsavepos{\theNoTableEntry l}
		\hspace{0pt plus 1filll}%
		\zsavepos{\theNoTableEntry r}
		\tikz[overlay]{%
			\draw[black]
			let
			\n{llx}={\zposx{\theNoTableEntry l}sp-\zposx{\theNoTableEntry r}sp},
			\n{urx}={0},
			\n{lly}={\zposy{\theNoTableEntry b}sp-\zposy{\theNoTableEntry r}sp},
			\n{ury}={\zposy{\theNoTableEntry t}sp-\zposy{\theNoTableEntry r}sp}
			in
			(\n{llx}, \n{lly}) -- (\n{urx}, \n{ury})
			(\n{llx}, \n{ury}) -- (\n{urx}, \n{lly})
			;
		}%
	}%
}
\DeclareMathOperator{\bal}{bal}
\DeclareMathOperator{\ci}{Ci}
\newtheorem{theorem}{Theorem}
\newtheorem{lemma}[theorem]{Lemma}
\newtheorem{definition}[theorem]{Definition}
\newtheorem{prop}[theorem]{Proposition}
\newtheorem{corollary}[theorem]{Corollary}
\newtheorem{conjecture}[theorem]{Conjecture}
\crefname{proposition}{proposition}{propositions}
\title{On the balanceability of some graph classes\tnoteref{supports}}
\author[imatem]{Antoine Dailly\corref{cor}}
\author[imatem]{Adriana Hansberg}
\author[imatem]{Denae Ventura}
\address[imatem]{Instituto de Matem\'aticas, UNAM Juriquilla, 76230 Quer\'etaro, Mexico.}
\date{}
\begin{document}
	
	\begin{frontmatter}
		
		\begin{abstract}
			Given a graph $G$, a 2-coloring of the edges of $K_n$ is said to contain a \emph{balanced copy} of $G$ if we can find a copy of $G$ such that half of its edges are in each color class. If, for every sufficiently large $n$, there exists an integer $k$ such that every 2-coloring of $K_n$ with more than $k$ edges in each color class contains a balanced copy of $G$, then we say that $G$ is \emph{balanceable}.
			Balanceability was introduced by Caro, Hansberg and Montejano, who also gave a structural characterization of balanceable graphs.
			
			In this paper, we extend the study of balanceability by finding new sufficient conditions for a graph to be balanceable or not. We use those conditions to fully characterize the balanceability of graph classes such as rectangular and triangular grids, as well as a special class of circulant graphs.
		\end{abstract}
	
		\begin{keyword}
			Balanceable Graphs;
			Ramsey Theory
		\end{keyword}
		
	\end{frontmatter}
	
	\section{Introduction}
	
	
	Ramsey Theory studies the presence of ordered substructures in large, arbitrarily ordered structures. For instance, the seminal Ramsey Theorem~\cite{R30} states that, for every integer $r$, every 2-coloring of the edges of $K_n$ contains a monochromatic $K_r$ whenever $n$ is sufficiently large. However, it is also possible to look for other kinds of ordered substructures. In particular, Caro, Hansberg and Montejano~\cite{CHM19, CHM20} introduced the notion of \emph{balanceability}, which looks for balanced copies of a graph in 2-colorings of the edges of $K_n$.
	
	
	More formally, let $G(V,E)$ be a simple, finite graph. A 2-coloring of the edges of $K_n$ is a function $\alpha:E(K_n) \rightarrow \{R,B\}$ that associates every edge with one of two colors $R$ and $B$, being called the color classes. A 2-coloring $\alpha$ of the edges of $K_n$ is said to contain a \emph{balanced copy} of $G$ if we can find a copy of $G$ such that its edge-set $E$ is partitioned in two parts $(E_1,E_2)$ such that $||E_1|-|E_2|| \le 1$ and $\alpha(e)=R$ for $e \in E_1$, and $\alpha(e)=B$ for $e \in E_2$. Said otherwise, we can find a copy of $G$ with half ($\pm 1$) of its edges in each color class. Being inspired by Ramsey Theory, balanceability is about finding a balanced copy of $G$ in any 2-coloring of the edges of $K_n$. However, it is trivial to see that, for any graph $G(V,E)$, we need at least $\frac{|E|}{2}$ edges in each color class in order to find a balanced copy of $G$. Thus, we need to guarantee a certain number of edges in each color class, leading to the following definition of a balanceable graph:
	
	\begin{definition}
		\label{def-balanceable}
		Let $G$ be a graph and $n$ be a positive integer. Let $\bal(n,G)$ be the smallest integer, if it exists, such that every 2-coloring $\alpha:E(K_n) \rightarrow \{R,B\}$ of the edges of the complete graph $K_n$ with $|{\alpha}^{-1}(R)|,|\alpha^{-1}(B)|>\bal(n,G)$ contains a balanced copy of $G$.
		
		If there is an $n_0$ such that, for every $n \geq n_0$, $\bal(n,G)$ exists, then $G$ is \emph{balanceable}.
	\end{definition}
	
	
	For example, a path on two edges is balanceable, since as long as each color class contains at least one edge, then we will be able to find two incident edges belonging to different color classes. For larger graphs, the question of deciding whether they are balanceable seems difficult to solve. Given a graph $G$ and a partition $(X,Y)$ of its vertices, we denote by $e(X,Y)$ the number of edges with one endpoint in $X$ and the other in $Y$; given a graph $G$ and a subset $W$ of its vertices, we denote by $e(G[W])$ the number of edges in the subgraph of $G$ induced by the vertices in $W$. In their paper, Caro, Hansberg and Montejano proved the following characterization of balanceable graphs:
	
	\begin{theorem}[\cite{CHM20}]
		\label{thm-characterization}
A graph $G(V,E)$ is balanceable if and only if $G$ has both a partition $V=X \cup Y$ and a set of vertices $W \subseteq V$ such that $e(X,Y),e(G[W]) \in \left\{\left\lfloor\frac{|E|}{2}\right\rfloor,\left\lceil\frac{|E|}{2}\right\rceil\right\}$.
	\end{theorem}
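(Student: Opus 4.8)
The plan is to prove both implications by tying each of the two structural conditions to an explicit family of rigid ``extremal'' 2-colorings of $K_n$. Throughout write $m=|V(G)|$ and $t=\lfloor |E|/2\rfloor$, and for a copy $\phi$ of $G$ let $r(\phi)$ be its number of red edges; then $\phi$ is balanced exactly when $r(\phi)\in\{t,|E|-t\}$. The guiding observation is that two special colorings force every copy of $G$ to realize a prescribed graph parameter: a \emph{bipartite} coloring forces $r(\phi)$ to equal a crossing number $e(X,Y)$, while a \emph{clique} coloring forces $r(\phi)$ to equal an induced edge-count $e(G[W])$. This is exactly what links the combinatorial conditions of the theorem to balanceability.

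For necessity I would argue by contrapositive, constructing obstructions with color classes of size $\Theta(n^2)$. Suppose no partition $V=X\cup Y$ satisfies $e(X,Y)\in\{t,|E|-t\}$. For each large $n$, split $V(K_n)$ into two near-equal parts $A,B$ and color an edge red precisely when it crosses between $A$ and $B$; both classes then have $\Theta(n^2)$ edges. Any copy $\phi$ of $G$ determines the partition $X=\phi^{-1}(A)$, $Y=\phi^{-1}(B)$ of $V(G)$, and its red edges are exactly the crossing ones, so $r(\phi)=e(X,Y)\notin\{t,|E|-t\}$; hence this coloring has no balanced copy at all. Since its minority class is quadratic in $n$, no subquadratic threshold can certify $G$, and $G$ is not balanceable. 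The symmetric construction with the clique coloring—fix $S\subseteq V(K_n)$ with $|S|\approx n/\sqrt{2}$ (so both classes are again $\Theta(n^2)$) and color an edge red iff both endpoints lie in $S$—gives $r(\phi)=e(G[W])$ for $W=\phi^{-1}(S)$, so the absence of a set $W$ with $e(G[W])\in\{t,|E|-t\}$ likewise obstructs balanceability. Thus both conditions are necessary.

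For sufficiency I would reverse the picture: given any 2-coloring whose minority class is large, I would locate inside $K_n$ a large, clean region that looks like one of the two extremal colorings and embed $G$ there through the matching template. A Tur\'an/Ramsey-type extraction provides this region—the K\H{o}v\'ari--S\'os--Tur\'an theorem yields a large complete bipartite subgraph in the majority color, and Ramsey's theorem yields large monochromatic cliques—so that at least one of the two templates is realizable; this is precisely where both structural hypotheses are used, since a priori we do not know which regime occurs. Embedding $G$ via the matching template gives a copy that is balanced up to a bounded error caused by the few miscolored edges of the region. A swapping lemma then removes this error: re-routing the embedding one vertex at a time through a reservoir of unused, mutually interchangeable vertices changes $r(\phi)$ in small controlled steps, in either direction, so a discrete intermediate-value argument produces a copy with $r(\phi)\in\{t,|E|-t\}$, i.e.\ a balanced one.

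I expect sufficiency, and within it the swapping/interpolation step, to be the main obstacle. Re-routing a single vertex of an embedded $G$ can flip many edges at once, so $r(\phi)$ need not move in unit steps; forcing small, predictable increments requires a genuinely homogeneous reservoir in which many vertices are indistinguishable to the coloring. Producing such a reservoir is what drives the quantitative machinery: the extraction succeeds only once the minority class exceeds a threshold of order $n^{2-\varepsilon}$, which simultaneously fixes how large $n_0$ must be and shows $\bal(n,G)=o(n^2)$ for balanceable $G$. Reconciling a reservoir large enough for unit-step swaps with both color classes remaining above the balanceability threshold is the crux, and is where I would concentrate the work.
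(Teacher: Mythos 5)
You should first note that the paper does not actually prove \Cref{thm-characterization}: it is imported from \cite{CHM20}, and the text only records that the proof there goes through ``universal unavoidable patterns,'' namely that every $2$-coloring of $K_n$ with sufficiently many edges in each class contains one of two \emph{exactly} colored copies of $K_{2t}$ (a $K_{2t}$ split into two $t$-sets that is either monochromatic inside both parts in one color and monochromatic across in the other, or monochromatic inside one part in one color and monochromatic everywhere else in the other). Your necessity argument is correct and is the standard one: the bipartite coloring and the clique coloring are exactly the two extremal obstructions, and in them every copy of $G$ has red count $e(X,Y)$, resp.\ $e(G[W])$, so the absence of a suitable cut, resp.\ induced subgraph, kills all balanced copies while both color classes stay quadratic. (Pedantically, under the literal wording of \Cref{def-balanceable} one should also observe that these colorings have minority class within $O(n)$ of $\binom{n}{2}/2$, so no admissible threshold excludes them; that imprecision is inherited from the source and is not a flaw of yours.)

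The sufficiency half has a genuine gap, and it sits exactly where you predicted. The correct argument needs the unavoidable-patterns theorem in its \emph{exact} form: once one of the two perfectly colored $K_{2t}$'s with $t\ge|V(G)|$ is found, embedding $G$ with $X$ and $Y$ (resp.\ $W$ and its complement) into the two parts produces a copy whose red count is forced to be $e(X,Y)$ or $|E|-e(X,Y)$ (resp.\ $e(G[W])$ or $|E|-e(G[W])$), and the structural hypothesis makes this balanced with no error to correct. Your plan replaces this with an approximate extraction followed by a swapping/intermediate-value repair, and neither half works as stated. The K\H{o}v\'ari--S\'os--Tur\'an theorem gives a dense bipartite graph in one color but says nothing about the colors inside its two sides, and Ramsey's theorem gives monochromatic cliques but says nothing about the edges between them, so you have not shown that either template (even approximately) must occur; combining these into one of the two exact patterns under the sole assumption that both color classes are large is the substantial content of \cite{CHM20} and cannot be waved through. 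Moreover the discrete intermediate-value step is invalid as written: re-routing one vertex $v$ of an embedding changes the red count by anything up to $d_G(v)$, so the count can jump over the target value, as you yourself concede without resolving it. The fix is not a better reservoir but to dispense with the interpolation entirely by proving, or citing, the exact pattern extraction.
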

	
	In other words, a graph is balanceable if and only if it has both a cut crossed by exactly half of its edges (floor or ceiling) and an induced subgraph containing exactly half of its edges (floor or ceiling). \Cref{thm-characterization} was proved by showing that, for every integer $t$ and any $n$ sufficiently large, there exists a number $m(n,t)$ such that every 2-coloring of the edges of $K_n$ with more than $m(n,t)$ edges in each color class contains one of two specific colored copies of $K_{2t}$ (called \emph{universal unavoidable patterns} in \cite{CHM20}). Those specific colored copies may then be used, for some graph $G$, to find a balanced copy of $G$ or to prove that no balanced copy of $G$ may exist.
	
	Caro, Hansberg and Montejano~\cite{CHM20} showed via \Cref{thm-characterization} that trees are balanceable. It was shown in~\cite{CHM19}, previous to this result, that the only balanceable complete graph with an even number of edges is $K_4$. Amoebas, a wide family of balanceable graphs with interesting interpolation properties were introduced in \cite{CHM20} and further developed in \cite{CHM20-2}. Caro, Lauri and Zarb~\cite{CLZ19} exhaustively studied the balanceability of graphs of at most four edges. For other references related to balanceability of graphs, see also \cite{BHMM, CHLZ, CHM19-2}. However, the question of balanceability remains open for many graph classes.
	An explanation for this is that the characterization of \Cref{thm-characterization} can be unpractical to handle. In this paper, we will give weaker but more practical conditions for deciding whether a given graph is balanceable or not (\Cref{sec-conditions}), that we will apply to several graph classes, namely a special class of circulant graphs in \Cref{sec-circulant}, and rectangular and triangular grids in \Cref{sec-grids}.

	\section{Sufficient conditions for balanceable and non-balanceable graphs}
	\label{sec-conditions}
	
	In this section, we are concerned with finding weaker but more practical conditions for deciding whether a given graph can be balanceable. Those conditions will be based on the characterization of balanceable graphs provided by \Cref{thm-characterization}.
	We will first give conditions based on subsets of vertices verifying certain constraints, before studying more global properties and what they imply on the balanceability of the graph.
	
	\subsection{Conditions based on subsets of vertices verifying certain constraints}
	
	A first example is the following sufficient condition for a graph to be balanceable:
	
	\begin{prop}
		\label{prop-degreeHalfEdges}
		Let $G(V,E)$ be a graph. If there exists an independent set of vertices  $I \subset V$ such that $\sum_{v \in I} d(v) = \frac{|E|}{2}$, then $G$ is balanceable.
	\end{prop}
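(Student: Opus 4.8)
The plan is to invoke the characterization of \Cref{thm-characterization}, so I need to exhibit, simultaneously, a cut $V = X \cup Y$ crossed by exactly half the edges and an induced subgraph $G[W]$ containing exactly half the edges. The starting observation is that since $\sum_{v \in I} d(v) = \frac{|E|}{2}$ must be an integer, $|E|$ is even, and therefore $\left\lfloor\frac{|E|}{2}\right\rfloor = \left\lceil\frac{|E|}{2}\right\rceil = \frac{|E|}{2}$. This simplifies the target: I only need both quantities to equal exactly $\frac{|E|}{2}$, with no floor/ceiling slack to worry about.

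The crucial structural fact I would use is that $I$ is \emph{independent}: no edge of $G$ has both of its endpoints in $I$. Consequently, every edge counted in the degree sum $\sum_{v \in I} d(v)$ is an edge with exactly one endpoint in $I$, and each such edge is counted exactly once (there is no double-counting of edges internal to $I$, since there are none). This means that the number of edges crossing the cut between $I$ and $V \setminus I$ is precisely $e(I, V \setminus I) = \sum_{v \in I} d(v) = \frac{|E|}{2}$.

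First I would take $X = I$ and $Y = V \setminus I$; by the observation above, $e(X,Y) = \frac{|E|}{2}$, which supplies the cut required by \Cref{thm-characterization}. Then I would take $W = V \setminus I$ for the induced-subgraph condition. The edges lying inside $G[W]$ are exactly the edges of $G$ with no endpoint in $I$, so by complementation $e(G[W]) = |E| - e(I, V\setminus I) = |E| - \frac{|E|}{2} = \frac{|E|}{2}$. Thus the single set $W = V \setminus I$ (equivalently $Y$) witnesses both halves of the characterization at once.

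Since both $e(X,Y)$ and $e(G[W])$ equal $\frac{|E|}{2} \in \left\{\left\lfloor\frac{|E|}{2}\right\rfloor,\left\lceil\frac{|E|}{2}\right\rceil\right\}$, \Cref{thm-characterization} immediately yields that $G$ is balanceable. I do not anticipate a genuine obstacle here: the only point that requires care is the no-double-counting argument, which is exactly where the hypothesis that $I$ is independent is used, and which makes the degree sum coincide with both the cut size and the complement of the induced-subgraph edge count.
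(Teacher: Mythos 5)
Your proof is correct and is essentially identical to the paper's: it also sets $X=I$ and $Y=W=V\setminus I$, deduces $e(X,Y)=\frac{|E|}{2}=e(G[W])$ from independence of $I$, and invokes \Cref{thm-characterization}. You simply spell out the counting details that the paper leaves implicit.
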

	
	\begin{proof}
		Let $X=I$ and $Y=W=V \setminus I$. Due to the condition on $I$, we have $e(X,Y)=\frac{|E|}{2}=e(G[W])$, and thus, by \Cref{thm-characterization}, $G$ is balanceable.
	\end{proof}
	
	This condition gives a direct proof of the balanceability of cycles of length equal to a multiple of~4:
	
	\begin{corollary}
		\label{cor-c4l}
		Let $\ell$ be a positive integer. The cycle $C_{4\ell}$ is balanceable.
	\end{corollary}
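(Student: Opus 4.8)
The plan is to apply \Cref{prop-degreeHalfEdges} directly. The cycle $C_{4\ell}$ has $|E| = 4\ell$ edges, so $\frac{|E|}{2} = 2\ell$. Since every vertex in a cycle has degree exactly~$2$, I need to exhibit an independent set $I$ with $\sum_{v \in I} d(v) = 2\ell$, which means $|I| = \ell$ (as each vertex contributes~$2$ to the sum). So the entire task reduces to finding an independent set of size exactly $\ell$ in $C_{4\ell}$.

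First I would label the vertices of the cycle as $v_0, v_1, \ldots, v_{4\ell-1}$ with edges $v_i v_{i+1}$ (indices modulo $4\ell$). An independent set is then a set of vertices no two of which are consecutive. The natural choice is to take every fourth vertex, for instance $I = \{v_0, v_4, v_8, \ldots, v_{4(\ell-1)}\} = \{v_{4j} : 0 \le j \le \ell-1\}$. This set has exactly $\ell$ vertices, and consecutive chosen vertices differ by~$4$ in index, so no two are adjacent; I should also check the wraparound, namely that $v_{4(\ell-1)} = v_{4\ell-4}$ and $v_0$ are not adjacent, which holds since their indices differ by~$4$ (modulo $4\ell$). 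Thus $I$ is independent.

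With $I$ in hand, $\sum_{v \in I} d(v) = 2 \cdot \ell = 2\ell = \frac{|E|}{2}$, so the hypothesis of \Cref{prop-degreeHalfEdges} is satisfied and $C_{4\ell}$ is balanceable.

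This proof is essentially immediate, so there is no real obstacle; the only point requiring a moment of care is verifying that the chosen set is genuinely independent around the full cycle, including the wraparound edge. An alternative to spelling out explicit indices would be to note that any cycle $C_{2k}$ on an even number of vertices is bipartite, and one of its two color classes is an independent set of size~$k$; specializing to $2k = 4\ell$ gives an independent set of size $2\ell$, from which one selects any $\ell$ of its vertices. Either phrasing yields the required set of size~$\ell$, and I would favor the explicit construction for concreteness.
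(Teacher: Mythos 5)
Your proof is correct and matches the paper's argument exactly: the same independent set of every fourth vertex, $I=\{u_{4i} : 0\le i\le \ell-1\}$, followed by an application of \Cref{prop-degreeHalfEdges}. The only difference is that you spell out the independence and degree-sum verifications that the paper leaves implicit.
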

	
	\begin{proof}
		We denote the vertices of $C_{4\ell}$ by $u_0,u_1,\ldots,u_{4\ell-1}$. By setting $I = \{u_{4i}~|~i \in \{0,\ldots,\ell-1\}\}$, we can apply \Cref{prop-degreeHalfEdges} and get the result.
	\end{proof}
	
	Furthermore, we can set $I$ to be a singleton, in which case we obtain the following:
	
	\begin{prop}
		\label{prop-graphWithBigVertex}
		Let $G(V,E)$ be a graph. If there is a vertex $v \in V$ with $d(v)=\frac{|E|}{2}$, then $G$ is balanceable.
	\end{prop}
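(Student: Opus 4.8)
The plan is to observe that this proposition is precisely the special case of \Cref{prop-degreeHalfEdges} obtained by taking the independent set $I$ to be a single vertex. So the entire argument will consist of checking that the hypotheses of \Cref{prop-degreeHalfEdges} are met by the choice $I = \{v\}$, and then invoking that proposition directly.

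First I would note that a singleton $I = \{v\}$ is trivially an independent set of $G$, since the subgraph induced by a single vertex contains no edges at all. Hence $I$ satisfies the independence requirement of \Cref{prop-degreeHalfEdges} without any further work.

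Next I would verify the degree-sum condition. Since $I$ consists of the single vertex $v$, the sum $\sum_{u \in I} d(u)$ collapses to $d(v)$, and by the hypothesis of the proposition we have $d(v) = \frac{|E|}{2}$. Therefore $\sum_{u \in I} d(u) = \frac{|E|}{2}$, which is exactly the condition required by \Cref{prop-degreeHalfEdges}. Applying that proposition then yields that $G$ is balanceable.

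I do not expect any genuine obstacle here, as the statement is an immediate specialization of the preceding result; the only point worth stating explicitly is the (trivial) observation that a one-element vertex set is automatically independent, which is what lets us drop the independence hypothesis from the phrasing of this corollary-style proposition.
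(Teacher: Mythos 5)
Your proof is correct and is exactly the paper's argument: take $I=\{v\}$, observe that a singleton is trivially independent with degree sum $d(v)=\frac{|E|}{2}$, and apply \Cref{prop-degreeHalfEdges}. Nothing further is needed.
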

	
	\begin{proof}
		Let $I$ be the singleton containing the vertex $v$ with $d(v)=\frac{|E|}{2}$ and apply \Cref{prop-degreeHalfEdges}.
	\end{proof}
	
	This condition can be applied to the family of wheels:
	
	\begin{corollary}
		\label{cor-wheels}
		Wheels are balanceable.
	\end{corollary}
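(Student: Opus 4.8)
The plan is to exhibit a single high-degree vertex and invoke \Cref{prop-graphWithBigVertex}. Recall that a wheel is obtained from a cycle $C_n$ by adding a central hub vertex $v$ joined to every vertex of the cycle. First I would count the edges: the cycle contributes $n$ edges and the $n$ spokes incident to $v$ contribute $n$ more, so the wheel has $|E| = 2n$ edges in total. In particular $|E|$ is always even, which is exactly the parity needed for $\frac{|E|}{2}$ to be an achievable integer degree.

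Next I would observe that the hub $v$ is adjacent to all $n$ cycle vertices, so $d(v) = n = \frac{2n}{2} = \frac{|E|}{2}$. Thus the hub has degree equal to precisely half the number of edges, and \Cref{prop-graphWithBigVertex} applies directly to yield that the wheel is balanceable. (Equivalently, one could take the singleton $I = \{v\}$ in \Cref{prop-degreeHalfEdges}, since an isolated hub together with the cycle vertices furnishes both the required cut and the required induced subgraph.)

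There is essentially no obstacle here: the entire content is the edge count $|E| = 2n$ together with the identity $d(v) = \frac{|E|}{2}$, after which the conclusion is immediate. The only point that merits a moment's care is confirming that this holds uniformly across all wheel sizes---including the smallest case, where the wheel coincides with $K_4$---but since $|E| = 2n$ for every $n$, the hub always has the desired degree and the argument goes through without modification.
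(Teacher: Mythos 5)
Your argument is correct and is exactly the paper's proof: count the $2n$ edges of $W_n$, note the hub has degree $n=\frac{|E|}{2}$, and apply \Cref{prop-graphWithBigVertex}. No further comment is needed.
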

	
	\begin{proof}
		The wheel $W_n$ contains $2n$ edges, and the center of $W_n$ has degree $n$, hence \Cref{prop-graphWithBigVertex} applies.
	\end{proof}

	\subsection{Conditions based on global properties of the graph}
	
	We now focus on more global properties, such as regularity or whether the graph is eulerian or not.
	First, we have a condition guaranteeing that a graph cannot be balanced. Recall that a graph is \emph{eulerian} if all its vertices have even degree.
	
	\begin{prop}
		\label{prop-halfEdgesOddAndEulerian}
		Let $G(V,E)$ be an eulerian graph with an even number of edges. If $\frac{|E|}{2}$ is odd, then $G$ is not balanceable.
	\end{prop}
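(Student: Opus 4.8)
The plan is to invoke the characterization of \Cref{thm-characterization} and show that its cut condition can never be satisfied under the parity hypothesis. Since $|E|$ is even, we have $\left\lfloor\frac{|E|}{2}\right\rfloor = \left\lceil\frac{|E|}{2}\right\rceil = \frac{|E|}{2}$, so balanceability would require, in particular, a partition $V = X \cup Y$ with $e(X,Y) = \frac{|E|}{2}$. I would therefore ignore the induced-subgraph condition on $W$ entirely and concentrate on proving that no partition can realize the required value of $e(X,Y)$.

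The key step is a parity observation about cuts in eulerian graphs. For an arbitrary partition $(X,Y)$, summing degrees over $X$ counts each edge inside $X$ twice and each crossing edge once, giving $\sum_{v \in X} d(v) = 2e(G[X]) + e(X,Y)$. Since $G$ is eulerian, every $d(v)$ is even, so the left-hand side is even; as $2e(G[X])$ is also even, it follows that $e(X,Y)$ is even. Hence every cut of an eulerian graph is crossed by an even number of edges.

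Combining these observations yields the result: by hypothesis $\frac{|E|}{2}$ is odd, so no partition can satisfy $e(X,Y) = \frac{|E|}{2}$. The cut half of the condition in \Cref{thm-characterization} thus fails for every choice of $X$ and $Y$, and therefore $G$ cannot be balanceable.

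There is no genuinely hard obstacle here; the only subtlety is recognizing that it suffices to defeat the cut condition alone (rather than arguing about both the cut and the induced subgraph $W$ simultaneously), and that the eulerian hypothesis forces every cut to have even size, which clashes directly with the assumed odd value of $\frac{|E|}{2}$.
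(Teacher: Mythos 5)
Your proof is correct and follows essentially the same route as the paper: both invoke \Cref{thm-characterization} and defeat the cut condition by showing that every cut of an eulerian graph is crossed by an even number of edges, which is incompatible with $\frac{|E|}{2}$ being odd. The paper extracts this parity fact by counting odd-degree vertices in $G[X]$ and applying the handshake lemma there, whereas you sum degrees over $X$ directly via $\sum_{v \in X} d(v) = 2e(G[X]) + e(X,Y)$; this is the same argument in a slightly more streamlined form.
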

	
	\begin{proof}
		Assume by contradiction that $G$ is balanceable. Then, using \Cref{thm-characterization}, there is a partition of $V$ in two sets $X$ and $Y=V \setminus X$ such that half the edges are between $X$ and $Y$. Let us denote by $X_{odd}$ the set of vertices of $X$ that have an odd number of neighbours in $Y$. The fact that $\frac{|E|}{2}$ is odd implies that $|X_{odd}|$ is odd. This in turn implies that the vertices of $X_{odd}$ have odd degree in $G[X]$, since they have even degree in~$G$. Thus, $G[X]$ has an odd number of vertices with odd degree, which is impossible. This contradiction yields the result.
	\end{proof}
	
	This allows us to prove that cycles on $4\ell+2$ vertices are not balanceable:
	
	\begin{corollary}
		\label{cor-c4l+2}
		Let $\ell$ be a positive integer. The cycle $C_{4\ell+2}$ is not balanceable.
	\end{corollary}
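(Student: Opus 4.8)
The plan is to apply \Cref{prop-halfEdgesOddAndEulerian} directly, so the entire task reduces to verifying that $C_{4\ell+2}$ satisfies its three hypotheses: being eulerian, having an even number of edges, and having $\frac{|E|}{2}$ odd.

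First I would observe that the cycle $C_{4\ell+2}$ has exactly $4\ell+2$ edges, since a cycle on $n$ vertices has $n$ edges. This number is even, so the second hypothesis holds. Next, every vertex of a cycle has degree $2$, which is even, so $C_{4\ell+2}$ is eulerian and the first hypothesis holds. Finally, I would compute $\frac{|E|}{2} = \frac{4\ell+2}{2} = 2\ell+1$, which is odd, giving the third hypothesis.

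With all three conditions verified, \Cref{prop-halfEdgesOddAndEulerian} immediately yields that $C_{4\ell+2}$ is not balanceable. There is no real obstacle here: the result is a routine specialization of the preceding proposition, and the only content is the arithmetic check that the number of edges is even while half of it is odd. The work of establishing non-balanceability was already carried out in the parity argument inside the proof of \Cref{prop-halfEdgesOddAndEulerian}, so this corollary simply records the consequence for this particular family of cycles.
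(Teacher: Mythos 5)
Your proof is correct and follows exactly the same route as the paper's: both verify that $C_{4\ell+2}$ is eulerian with $4\ell+2$ edges, note that $\frac{|E|}{2}=2\ell+1$ is odd, and invoke \Cref{prop-halfEdgesOddAndEulerian}. Your version just spells out the hypothesis checks a bit more explicitly.
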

	
	\begin{proof}
		The cycle $C_{4\ell+2}$ is eulerian, and has $4\ell+2$ edges, so $\frac{|E(C_{4\ell+2})|}{2}=2\ell+1$ is odd and we can apply \Cref{prop-halfEdgesOddAndEulerian}.
	\end{proof}
	
	We also study the balanceability of some specific regular graphs. First, \Cref{prop-halfEdgesOddAndEulerian} allows us to prove that some regular graphs are not balanceable:
	
	\begin{corollary}
		\label{cor-regular}
		Let $d$ be an even positive integer, and let $G$ be a $d$-regular graph of order $n$ with an even number of edges. $G$ is not balanceable in the following cases:
		\begin{enumerate}
			\item If $d,n \equiv 2 \bmod 4$;
			\item If $d = 4a$ with $a$ odd, and $n \equiv 1,3 \bmod 4$.
		\end{enumerate}
	\end{corollary}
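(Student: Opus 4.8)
The plan is to reduce everything to \Cref{prop-halfEdgesOddAndEulerian}. Since $d$ is even, every vertex of $G$ has even degree, so $G$ is automatically eulerian. The number of edges of a $d$-regular graph of order $n$ is $|E| = \frac{dn}{2}$, hence $\frac{|E|}{2} = \frac{dn}{4}$. Because $G$ is assumed to have an even number of edges, the only thing left to check in order to invoke \Cref{prop-halfEdgesOddAndEulerian} is that $\frac{dn}{4}$ is odd in each of the two listed cases. So the whole argument is a short parity computation, done case by case.

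For the first case, I would write $d = 2d'$ and $n = 2n'$, where $d' = \frac{d}{2}$ and $n' = \frac{n}{2}$ are both odd (this is exactly the meaning of $d,n \equiv 2 \bmod 4$). Then $\frac{dn}{4} = d'n'$, a product of two odd integers, hence odd, so \Cref{prop-halfEdgesOddAndEulerian} applies. For the second case, $n \equiv 1,3 \bmod 4$ just means $n$ is odd, and $d = 4a$ with $a$ odd gives $\frac{dn}{4} = an$, again a product of two odd integers and therefore odd, so \Cref{prop-halfEdgesOddAndEulerian} applies once more.

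Honestly, there is no real obstacle here: the statement is a direct specialization of \Cref{prop-halfEdgesOddAndEulerian} and the entire content is the arithmetic identity $\frac{|E|}{2} = \frac{dn}{4}$ together with a parity check. The only points worth a moment of care are confirming that $\frac{dn}{4}$ is genuinely an integer in each case (in the first case both $d$ and $n$ are even, in the second $d$ is divisible by $4$) and noting that the ``even number of edges'' hypothesis is in fact consistent with, and indeed forced by, these divisibility conditions, since in both cases $|E| = \frac{dn}{2} = 2\cdot\frac{dn}{4}$ is even.
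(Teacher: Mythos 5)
Your proof is correct and follows exactly the paper's route: reduce to \Cref{prop-halfEdgesOddAndEulerian} via eulerianity from $d$ even and the identity $\frac{|E|}{2}=\frac{dn}{4}$, then check oddness case by case. The only difference is cosmetic — you factor out the powers of two directly rather than expanding $(4a+2)(4b+2)$ as the paper does — so nothing further is needed.
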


	Note that this is a sufficient condition for non-balanceability, so a regular graph that verifies neither conditions may still be non-balanceable.
	
	\begin{proof}
		If $G$ is $d$-regular of order $n$ and size $e$, then $e=\frac{dn}{2}$. Furthermore, since $d$ is even, $G$ is eulerian.
		We will study the two cases:
		\begin{enumerate}
			\item Given $d=4a+2$ and $n=4b+2$, then $\frac{e}{2}=\frac{(4a+2)(4b+2)}{4}=4ab+2a+2b+1$ is odd. Hence, \Cref{prop-halfEdgesOddAndEulerian} implies that $G$ is not balanceable.
			\item Given $d=4a$ and $n=4b+c$ with $a$ odd and $c \in \{1,3\}$, then, $\frac{e}{2}=\frac{4a(4b+c)}{4}=4ab+ac$. Since $c \in \{1,3\}$, this has the same parity than $a$, hence it is odd and \Cref{prop-halfEdgesOddAndEulerian} implies that $G$ is not balanceable.
		\end{enumerate}
	\end{proof}
	
	Note that \Cref{cor-regular} easily implies that the complete graphs $K_{8k+5}$ are not balanceable, which was included in a more complete statement on the balanceability of complete graphs~\cite{CHM19}, but for which the proof was more complex.
	
	Finally, we state the following property about the balanceability of some bipartite regular graphs:
	
	\begin{prop}
		\label{prop-regularBipartite4n}
		Let $d$ be a positive integer. If $G$ is a bipartite, $d$-regular graph on $4n$ vertices, then $G$ is balanceable.
	\end{prop}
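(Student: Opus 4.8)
The plan is to verify the two conditions of \Cref{thm-characterization} directly, exploiting the structure of a regular bipartite graph. First I would record the basic parameters. Writing $A$ and $B$ for the two parts of $G$, the $d$-regularity forces $|A|=|B|$ (both equal $|E|$ divided by $d$, since every edge has exactly one endpoint in each part), so the hypothesis that $G$ has $4n$ vertices gives $|A|=|B|=2n$ and $|E|=d\cdot|A|=2dn$. In particular $\frac{|E|}{2}=dn$ is an integer, so both the floor and the ceiling of $\frac{|E|}{2}$ equal $dn$, and it suffices to exhibit a cut crossed by exactly $dn$ edges together with an induced subgraph containing exactly $dn$ edges.

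For the induced subgraph, the key observation is that every vertex of $A$ has all of its $d$ neighbours inside $B$. Hence, taking any $T\subseteq A$ with $|T|=n$ and setting $W=T\cup B$, the subgraph $G[W]$ contains precisely the edges incident to $T$, of which there are $d|T|=dn=\frac{|E|}{2}$. This settles the induced-subgraph half of the characterization. For the cut, I would use the same idea: pick any $S\subseteq A$ with $|S|=n$ and take the partition $Y=S$, $X=V\setminus S=(A\setminus S)\cup B$. Since $G$ is bipartite, $S$ is independent and every edge incident to $S$ goes to $B\subseteq X$, so the edges crossing the cut are exactly those incident to $S$, and there are $d|S|=dn=\frac{|E|}{2}$ of them. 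Both conditions of \Cref{thm-characterization} then hold, and $G$ is balanceable.

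I do not expect a genuine obstacle here; the substance of the argument is simply the remark that in a $d$-regular bipartite graph the number of edges incident to a set of $A$-vertices is a controllable multiple of $d$, and that $\frac{|E|}{2}$ is itself such a multiple. The only place where the hypotheses really enter is the requirement that $n=\frac{|A|}{2}$ be an integer, which is exactly what the assumption of $4n$ vertices (equivalently, $|A|$ even) guarantees. This also explains why the statement is phrased for $4n$ vertices rather than for an arbitrary even order: if $|A|$ were odd, the target $\frac{|E|}{2}=\frac{d|A|}{2}$ would not be an integer multiple of $d$, and this particularly clean choice of $S$ and $W$ would no longer land on exactly $\frac{|E|}{2}$ edges.
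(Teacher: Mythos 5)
Your proof is correct and is essentially the paper's argument: the paper simply routes the same construction through \Cref{prop-degreeHalfEdges}, taking an independent set $I$ of $n$ vertices of $A$ (so that $\sum_{v\in I}d(v)=dn=\frac{|E|}{2}$) and letting $X=I$, $Y=W=V\setminus I$, which is exactly your cut and (up to the complementary choice $W=V\setminus S$ versus $W=T\cup B$) your induced subgraph. Your closing remark about why $4n$ vertices, i.e.\ $|A|$ even, is needed matches the role the hypothesis plays in the paper.
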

	
	\begin{proof}
		Let $G$ be a bipartite, $d$-regular graph of order $4n$, and let $A \cup B$ be its bipartition. Since $G$ is regular, $|A|=|B|=2n$, and $|E(G)|=2nd$.
		
		Let $I$ be a subset of $n$ vertices in $A$. Clearly, $I$ is an independent set, and by regularity $\sum_{v \in I} d(v) = nd = \frac{|E(G)|}{2}$. Thus, by \Cref{prop-degreeHalfEdges}, $G$ is balanceable.
	\end{proof}
	
	
%
	\ \\
	
	In the next two sections, we will characterize the balanceability of several graph classes, starting with a special class of circulant graphs.

	\section{Balanceability of a special class of circulant graphs}
	\label{sec-circulant}
	
	Circulant graphs are defined as the Cayley graphs of cyclic groups. Another definition of this family is the following. Let $k$ be a positive integer and $L$ be a list of integers in $\{1,\ldots,\lfloor\frac{k}{2}\rfloor\}$. The circulant graph $\ci_n(L)$ is the graph of order $k$ where the vertices are numbered from $0$ to $k-1$ and where, for every $j \in L$, the vertices $i$ and $(i+j) \bmod k$ are adjacent.
	
	Circulant graphs include empty graphs ($L=\emptyset$), cycles ($\ci_k(1)$), complete graphs ($\ci_k(1,2,\ldots,\lfloor\frac{k}{2}\rfloor)$), balanced complete bipartite graphs ($\ci_{2k}(1,3,\ldots,k)$ for $k$ odd and $\ci_{2k}(1,3,\ldots,k-1)$ for $k$ even), antiprism graphs ($\ci_{2k}(1,2)$), odd prism graphs ($\ci_{2k}(2,k)$ for $k$ odd), M\"obius ladders ($\ci_{2k}(1,k)$), as well as many other graph families.
	
	The balanceability of some families of circulant graphs has already been indirectly discussed, such as cycles (\Cref{cor-c4l,cor-c4l+2}) or complete graphs~\cite{CHM19}. In this section, we will focus on a specific class of circulant graphs, which can be defined as $\ci_k(1,\ell)$ for every integer $\ell$ in $\{2,\ldots,\lfloor\frac{k}{2}\rfloor\}$. Another way of defining this family is the following: let $k$ and $\ell$ be two integers such that $k>3$ and $\ell \in \{2,\ldots,k-2\}$, the graph $C_{k,\ell}$ is the cycle graph $C_k$ with vertices $u_0,\ldots,u_{k-1}$, and such that the chords $u_iu_{i+\ell}$ (where the addition is modulo $k$) are added to the edge set. Note that this family includes, among others, the antiprism graphs and M\"obius ladders.
	
	The following statement fully characterizes which of the graphs of the form $C_{k,\ell}$ are balanceable:
	
	\begin{theorem}
		\label{thm-circulant}
		Let $k$ and $\ell$ be two integers such that $k>3$ and $\ell \in \{2,\ldots,k-2\}$. The graph $C_{k,\ell}$ is balanceable if and only if $k$ is even and $(k,\ell) \neq (6,2)$.
	\end{theorem}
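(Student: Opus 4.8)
The plan is to prove both implications through the structural characterization of \Cref{thm-characterization}. I would write $k=2p$ and assume throughout that $2\le \ell \le p$, which is harmless since the chords of length $\ell$ and $k-\ell$ coincide. The structural facts to record first are that $C_{k,\ell}$ is $4$-regular with $2k$ edges when $\ell<p$, that it is the Möbius ladder (hence $3$-regular with $3k/2$ edges) when $\ell=p$, and that it is bipartite precisely when $k$ is even and $\ell$ is odd, since the even/odd partition of the cycle is respected by a chord of length $\ell$ exactly when $\ell$ is odd.

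For necessity there are two obstructions. If $k$ is odd then $\ell<p$, so $C_{k,\ell}$ is eulerian with $|E|=2k$ and $\tfrac{|E|}{2}=k$ odd; \Cref{prop-halfEdgesOddAndEulerian} then rules out balanceability, giving that balanceability forces $k$ even. The remaining case is $C_{6,2}=\ci_6(1,2)$, which is the octahedron $K_{2,2,2}$ with $12$ edges. Here I would simply enumerate induced subgraphs and observe that the attainable induced edge counts are $0,1,2,3,4,5,8,12$, skipping $6=\tfrac{|E|}{2}$, so by \Cref{thm-characterization} the graph is not balanceable. Note that a $6$-edge cut \emph{does} exist, witnessed by a transversal triangle, so it is precisely the induced-subgraph condition that fails.

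For sufficiency I would, for every even $k$ with $(k,\ell)\ne(6,2)$, exhibit via \Cref{thm-characterization} both a half-edge cut and a half-edge induced subgraph. Assume first $\ell<p$ (the Möbius case is treated below) and split on the parity of $\ell$; the two parities are dual, each giving one object for free. When $\ell$ is even, the even/odd partition crosses all $k$ cycle edges and no chord, yielding a cut with exactly $\tfrac{|E|}{2}=k$ crossing edges immediately, and the work is to build $W$ with $e(G[W])=k$. When $\ell$ is odd the graph is bipartite, a contiguous arc $\{u_0,\dots,u_{m-1}\}$ induces $2m-\ell-1$ edges, and taking $m=p+\tfrac{\ell+1}{2}$ gives an induced subgraph with exactly $k$ edges for free; the work is then the cut, which I would extract from an independent set. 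Indeed, whenever an independent set of size $\tfrac{k}{4}$ exists (in particular inside one side of the bipartition when $k\equiv 0\bmod 4$), \Cref{prop-degreeHalfEdges} finishes outright; for $k\equiv 2\bmod 4$ I would instead take an independent set of size $\tfrac{p-1}{2}$ and adjoin one vertex having exactly one neighbour in it, raising the cut from $2p-2$ to exactly $k$.

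The main obstacle is the object that is never free: an induced subgraph with exactly $k$ edges when $\ell$ is even. Contiguous arcs realise only odd edge counts in this regime, so I would correct the parity either by using two arcs separated by gaps of length at least $\ell$ (contributing no cross-chords and summing two odd counts to the even target, feasible when $\ell\le\tfrac{p-1}{3}$) or, for larger $\ell$, by adjoining a single vertex chosen to have exactly one neighbour in an arc of edge count $k-1$. The delicate point throughout is wraparound: when $\ell$ is a large fraction of $k$ the chords of an added vertex can re-enter the arc and spoil the count, so I expect the bulk of the proof to be a case analysis over the ranges of $\ell$ relative to $k$, verifying in each that the target is attained exactly. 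The Möbius case $\ell=p$ I would fold in separately: for $p$ even, the set of $\tfrac{p}{2}$ vertices $\{u_0,u_2,\dots,u_{p-2}\}$ is independent with degree-sum $\tfrac{3p}{2}=\tfrac{|E|}{2}$, so \Cref{prop-degreeHalfEdges} applies, while for $p$ odd ($|E|=3p$ odd) I would give a direct interval construction meeting $\lfloor\tfrac{3p}{2}\rfloor$. Finally I would confirm that $(6,2)$ is the unique even exception by checking that these constructions succeed for every other even $(k,\ell)$, the octahedron being simply too small for any of them to have room.
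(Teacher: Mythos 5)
Your overall architecture matches the paper's: both directions go through \Cref{thm-characterization}, the odd-$k$ case is killed by \Cref{prop-halfEdgesOddAndEulerian} exactly as in \Cref{lem-circulant-odd-l}, the $C_{6,2}$ exception is settled by the same exhaustive enumeration of induced edge counts as in \Cref{lem-circulant-4a2-2}, the M\"obius ladders and the case $k\equiv 0\bmod 4$ with $\ell$ odd are handled by the same independent-set/bipartite-regularity arguments (\Cref{lem-circulant-k-k/2}, \Cref{lem-circulant-4a-odd}), and the remaining cases are meant to be a construction-plus-case-analysis as in \Cref{lem-circulant-4a-even,lem-circulant-4a2-odd,lem-circulant-4a2-2,lem-circulant-4a2-even}. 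Your even/odd-index cut for even $\ell$ is a genuinely nicer observation than the paper's constructions for that half of the task.

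However, there is a concrete error in the part you declare ``free.'' For $\ell$ odd you claim the contiguous arc $W=\{u_0,\dots,u_{m-1}\}$ with $m=p+\tfrac{\ell+1}{2}$ induces exactly $2m-\ell-1=k$ edges. The formula $2m-\ell-1$ is only valid when $m\le k-\ell$, i.e.\ when no wraparound chord $u_iu_{(i+\ell)\bmod k}$ has both endpoints in the arc; your choice of $m$ satisfies this only when $3\ell+1\le k$. For $C_{14,5}$ your arc is $\{u_0,\dots,u_9\}$, which contains the chord $u_9u_0$ and therefore induces $9+5+1=15$ edges, not $14$; worse, contiguous arcs in $C_{14,5}$ realise $\dots,12,15,\dots$ and skip the target $14$ altogether, so no arc works and a different construction is needed (the paper's \Cref{lem-circulant-4a2-odd} uses a complement-style set built from two adjacent pairs plus independent vertices). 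This is exactly the wraparound phenomenon you flag as the delicate point for even $\ell$, but you overlook it in the odd case, where it is equally fatal for $\ell>\tfrac{k-1}{3}$. Beyond this, the even-$\ell$ induced subgraph, the large-$\ell$ corrections, and the cut for the odd-$|E|$ M\"obius ladder are announced rather than carried out, so even setting the arc error aside the sufficiency direction is a plan, not a proof.
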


	Due to the many cases we have to consider, the proof of \Cref{thm-circulant} will be divided into several lemmas. The statement of \Cref{thm-circulant} and the different cases and lemmas that prove them are summarized in \Cref{tab-balCkl}. For the remainder of this section, we will assume that $\ell \leq \frac{k}{2}$, since if this is not the case then we can apply the same reasoning with $\ell' = k-\ell$.

%
%
%
%
	
	\begin{table}[h]
		\centering
		\bgroup
		\def\arraystretch{1.3}
		\begin{tabu}{|[1pt]c|[1pt]c|c|c|c|[1pt]}
			\tabucline[1pt]{-}
			\multirow{2}{*}{\diagbox[width=2cm,innerleftsep=.5cm,innerrightsep=.5cm]{$k$}{$\ell$}} & \multirow{2}{*}{$\frac{k}{2}$} & \multicolumn{3}{c|[1pt]}{$< \frac{k}{2}$} \\ \cline{3-5}
			
			& & odd & 2 & even, $>2$ \\ \tabucline[1pt]{-}
			
			odd & \notableentry & \multicolumn{3}{c|[1pt]}{Not balanceable (\Cref{lem-circulant-odd-l})} \\ \tabucline[1pt]{-}
			
			$\equiv 0 \bmod 4$ & \makecell{Balanceable\\ (\Cref{lem-circulant-k-k/2})} & \makecell{Balanceable\\ (\Cref{lem-circulant-4a-odd})} & \multicolumn{2}{c|[1pt]}{Balanceable (\Cref{lem-circulant-4a-even})} \\ \tabucline[1pt]{-}
			
			$\equiv 2 \bmod 4$ & \makecell{Balanceable\\ (\Cref{lem-circulant-k-k/2-odd})} & \makecell{Balanceable\\(\Cref{lem-circulant-4a2-odd})} & \makecell{Balanceable \\ except $C_{6,2}$ \\ (\Cref{lem-circulant-4a2-2})} & \makecell{Balanceable\\ (\Cref{lem-circulant-4a2-even})} \\ \tabucline[1pt]{-}
		\end{tabu}
		\egroup
		\caption{The balanceability of the graph $C_{k,\ell}$. If $\ell > \frac{k}{2}$, then refer to $C_{k,k-\ell}$.}
		\label{tab-balCkl}
	\end{table}
	
	First, we consider the case of $C_{k,\frac{k}{2}}$, which only exist if $k$ is even. For $k \geq 6$, those graphs are exactly the M\"obius ladders. We will distinguish two cases according to the parity of the edge number of  $C_{k,\frac{k}{2}}$, which happens to be even for $k \equiv 0 \bmod 4$, and odd for $k \equiv 2 \bmod 4$. 

	\begin{lemma}
		\label{lem-circulant-k-k/2}
		Let $k$ be a multiple of 4. The graph $C_{k,\frac{k}{2}}$ is balanceable.
	\end{lemma}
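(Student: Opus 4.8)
The plan is to apply \Cref{prop-degreeHalfEdges} by exhibiting a suitable independent set, rather than invoking the full characterization of \Cref{thm-characterization} directly. Write $k = 4m$. I first record two structural facts. The graph $C_{k,k/2}$ is $3$-regular: every vertex $u_i$ has cycle-neighbours $u_{i-1}$ and $u_{i+1}$ together with a single chord-neighbour $u_{i+k/2}$, and these three are distinct since $\ell = k/2 \geq 2$. Counting edges, the $k$ cycle edges together with the $k/2$ distinct chords (each chord $u_i u_{i+k/2}$ arises twice as $i$ ranges from $0$ to $k-1$) give $|E| = \frac{3k}{2} = 6m$, so $\frac{|E|}{2} = 3m$ is an integer.

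By \Cref{prop-degreeHalfEdges}, it then suffices to find an independent set $I$ with $\sum_{v \in I} d(v) = 3m$; by $3$-regularity this reduces to producing an independent set of size exactly $m$. The candidate I would use is
\[
I = \{u_0, u_2, u_4, \ldots, u_{2m-2}\},
\]
the even-indexed vertices among the first $2m$ of them, which plainly has size $m$.

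The verification that $I$ is independent is the only real content of the argument, and it splits along the two edge types. For the cycle edges it is immediate: all indices in $I$ are even, so no two of them differ by $1$ modulo $4m$. For the chords, I must check that no two elements of $I$ differ by $k/2 = 2m$ modulo $4m$; since every index in $I$ lies in $\{0, \ldots, 2m-2\}$, the absolute difference of any two of them is at most $2m-2 < 2m$, so none can be congruent to $2m$. This modular check is the step I expect to be the crux, though it is elementary precisely because the indices were chosen to stay within a single block of length $2m$. With $I$ independent of size $m$, \Cref{prop-degreeHalfEdges} delivers balanceability; note that the smallest case $k = 4$, where $C_{4,2} = K_4$ and $I = \{u_0\}$, is covered by the same argument.
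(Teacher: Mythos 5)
Your proof is correct and takes essentially the same route as the paper: the same independent set $I=\{u_0,u_2,\ldots,u_{2m-2}\}$ of size $k/4$, the same edge count $|E|=3k/2$, and the same appeal to \Cref{prop-degreeHalfEdges}. The only difference is that you spell out the independence check (which the paper dismisses as easy) and explicitly include the case $k=4$.
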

	
	\begin{proof}
		Let $k=4a$ with $a>1$. Note that, in $C_{k,\frac{k}{2}}$, every vertex has degree~3. Furthermore, let $e$ be the number of edges in $C_{k,\frac{k}{2}}$, then $e=\frac{3k}{2}=6a$, which implies that $\frac{e}{2}=3a$. Denote the vertices of $C_{k,\frac{k}{2}}$ by $u_0,u_1,u_2,\ldots,u_{k-1}$, and let $I=\{u_0,u_2,\ldots,u_{2a-2}\}$. It is easy to see that $I$ is an independent set of size $a$, and thus $\sum_{v \in I} d(v) = 3a = \frac{e}{2}$. Thus, by \Cref{prop-degreeHalfEdges}, $C_{k,\frac{k}{2}}$ is balanceable.
	\end{proof}
	
	\begin{lemma}
		\label{lem-circulant-k-k/2-odd}
		Let $k$ be an integer such that $k \equiv 2 \bmod 4$. The graph $C_{k,\frac{k}{2}}$ is balanceable.
	\end{lemma}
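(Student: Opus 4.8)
The plan is to apply the characterization of \Cref{thm-characterization} directly. Write $k = 4a+2$ with $a \ge 1$, and recall that $C_{k,\frac{k}{2}}$ is $3$-regular: it consists of the cycle $u_0 u_1 \cdots u_{k-1}$ together with the chords $u_i u_{i+2a+1}$, which, since $2a+1 = \frac{k}{2}$, form a perfect matching pairing each vertex with its antipode. Hence $e := |E| = \frac{3k}{2} = 6a+3$ is \emph{odd}, so that $\lfloor \frac{e}{2}\rfloor = 3a+1$ and $\lceil \frac{e}{2}\rceil = 3a+2$. This is exactly the obstacle that distinguishes this case from \Cref{lem-circulant-k-k/2}: because $\frac{e}{2}$ is not an integer, \Cref{prop-degreeHalfEdges} no longer applies, and I must produce separately a cut and an induced subgraph, each carrying a number of edges in $\{3a+1, 3a+2\}$.

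For the cut, I would first note that for any independent set $X$ one has $e(X, V\setminus X) = \sum_{x\in X} d(x) = 3|X|$, which is always a multiple of $3$ and therefore can never equal $3a+1$ or $3a+2$. The key idea is thus to perturb an independent set so that it induces exactly one edge: more generally $e(X, V\setminus X) = 3|X| - 2\,e(G[X])$, so a set of size $a+1$ inducing a single edge yields a cut of value $3(a+1) - 2 = 3a+1 = \lfloor\frac{e}{2}\rfloor$. Concretely I would take $X = \{u_0, u_2, u_4, \ldots, u_{2a-2}\} \cup \{u_{2a+1}\}$; the first $a$ vertices are even-indexed and pairwise non-adjacent, while $u_{2a+1}$ is adjacent to none of them except through the chord $u_0 u_{2a+1}$ (its antipode). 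A short verification that $X$ has $a+1$ vertices and exactly one internal edge then gives $e(X, V\setminus X) = 3a+1$.

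For the induced subgraph I would use a consecutive arc $W = \{u_0, u_1, \ldots, u_{m-1}\}$. Such an arc contains the $m-1$ path edges together with the chords having both endpoints in $W$; counting the latter as those pairs $\{u_j, u_{j+2a+1}\}$ with $j + 2a+1 \le m-1$ gives $e(G[W]) = (m-1) + (m - 2a - 1) = 2m - 2a - 2$ whenever $2a+2 \le m \le k-1$. This expression is always even, and since exactly one of $3a+1, 3a+2$ is even, I can solve $2m - 2a - 2 = 3a+1$ when $a$ is odd (taking $m = \frac{5a+3}{2}$) and $2m - 2a - 2 = 3a+2$ when $a$ is even (taking $m = \frac{5a+4}{2}$); in both cases the resulting $m$ lies in the admissible range. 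With a cut and an induced subgraph each of size in $\{3a+1, 3a+2\}$ in hand, \Cref{thm-characterization} yields that $C_{k,\frac{k}{2}}$ is balanceable.

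The main obstacle, as flagged above, is the oddness of $\frac{e}{2}$: it rules out the clean degree-sum argument and forces the two conditions of \Cref{thm-characterization} to be established by hand, the cut in particular requiring the non-obvious trick of breaking the ``multiple of $3$'' barrier with a single induced edge.
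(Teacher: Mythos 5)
Your proof is correct and follows essentially the same route as the paper: both arguments verify the two conditions of \Cref{thm-characterization} directly, using a consecutive arc $W$ of exactly the same length $\frac{5a+3}{2}$ (resp.\ $\frac{5a+4}{2}$) for the induced-subgraph condition, and a cut obtained from an $(a+1)$-vertex set with exactly one internal edge. The only cosmetic difference is in the cut: the paper realizes the single internal edge as the cycle edge $u_{2a-2}u_{2a-1}$ adjoined to $a-1$ independent even-indexed vertices, while you realize it as the chord $u_0u_{2a+1}$ adjoined to $a$ of them --- both give $e(X,Y)=3(a+1)-2=3a+1$.
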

	
	\begin{proof}
		Let $k=4a+2$ with $a \geq 1$. 

		Let $e$ be the number of edges in $C_{4a+2,2a+1}$, then $e=6a+3$ and thus $\lfloor\frac{e}{2}\rfloor=3a+1$ and $\lceil\frac{e}{2}\rceil=3a+2$. Note also that all vertices have degree~3.
		
		We will begin by proving that we can partition the vertices of $C_{4a+2,2a+1}$ in two sets $X$ and $Y$ such that $e(X,Y)=3a+1$. First, let $X_1 = \{u_0,u_2,\ldots,u_{2a-4}\}$ (if $a=1$ then $X_1$ is empty). Then, let $X_2 = \{u_{2a-2},u_{2a-1}\}$. Now, let $X=X_1 \cup X_2$ and $Y = V(C_{4a+2,2a+1}) \setminus X$. Clearly, the vertices in $X_1$ are an independent set, and are independent from the vertices in $X_2$. Thus, each vertex in $X_1$ has three neighbors in $Y$, and each vertex in $X_2$ has two neighbors in $Y$ (since they are adjacent). This implies that we have $e(X,Y) = 3(a-1)+4 = 3a+1$.
		
		Now, we will prove that there is a subset of vertices $W$ such that $e(G[W]) \in \{3a+1,3a+2\}$. The idea will be to take vertices along the cycle, thus taking a path and some chords within $G[W]$. There are two cases to consider:
		\begin{itemize}
			\item If $a$ is even, then we set $W := \{u_0,u_1,\ldots,u_{2a+\frac{a}{2}+1}\}$. The edges within $G[W]$ are of two sorts: there are $2a+\frac{a}{2}+1$ along the cycle, and there are $\frac{a}{2}+1$ chords (from $u_0u_{2a+1}$ to $u_{\frac{a}{2}}u_{2a+\frac{a}{2}+1}$). Adding those, we have $e(G[W])=2a+\frac{a}{2}+1+\frac{a}{2}+1=3a+2$.
			
			\item If $a$ is odd, then we set $W := \{u_0,u_1,\ldots,u_{2a+\frac{a-1}{2}+1}\}$. The edges within $G[W]$ are of two sorts: there are $2a+\frac{a-1}{2}+1$ along the cycle, and there are $\frac{a-1}{2}+1$ chords (from $u_0u_{2a+1}$ to $u_{\frac{a-1}{2}}u_{2a+\frac{a-1}{2}+1}$). Adding those, we have $e(G[W])=2a+\frac{a-1}{2}+1+\frac{a-1}{2}+1=3a+1$.
		\end{itemize}
		Those two cases allow us to conclude that $C_{4a+2,2a+1}$ verifies the conditions of \Cref{thm-characterization}, and thus $C_{4a+2,2a+1}$ is balanceable.
	\end{proof}

	In all future cases, we assume $\ell<\frac{k}{2}$, thus every vertex of $C_{k,\ell}$ has degree~4. Furthermore, let $e$ be the number of edges in $C_{k,\ell}$, then $e=2k$. We will also denote the vertices of $C_{k,\ell}$ by $u_0,u_1,u_2,\ldots,u_{k-1}$.
	
	\begin{lemma}
		\label{lem-circulant-odd-l}
		Let $k$ be an odd integer. The graph $C_{k,\ell}$ is not balanceable.
	\end{lemma}

	\begin{proof}
		Since $k$ is odd, $\frac{e}{2}=\frac{2k}{2}=k$ is odd. Since $C_{k,\ell}$ is eulerian, \Cref{prop-halfEdgesOddAndEulerian} implies that $C_{k,\ell}$ is not balanceable.
	\end{proof}

	\begin{lemma}
		\label{lem-circulant-4a-odd}
		Let $k$ be an integer such that $k \equiv 0 \bmod 4$, and let $\ell$ be an odd integer. The graph $C_{k,\ell}$ is balanceable.
	\end{lemma}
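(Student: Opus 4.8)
The plan is to invoke \Cref{prop-degreeHalfEdges}. Since we assume $\ell < \frac{k}{2}$, the graph $C_{k,\ell}$ is $4$-regular with $e = 2k$ edges, so $\frac{e}{2} = k$. Thus it suffices to exhibit an independent set $I$ with $\sum_{v \in I} d(v) = k$, which by $4$-regularity means an independent set of size exactly $\frac{k}{4}$. Because $k \equiv 0 \bmod 4$, this target size $\frac{k}{4}$ is a positive integer, so the whole task reduces to producing an independent set of that size.

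The key observation is a parity argument. Every edge of $C_{k,\ell}$ joins two vertices whose indices differ by $1$ (a cycle edge) or by $\ell$ (a chord) modulo $k$. A cycle edge $u_i u_{i+1}$ always joins an even-indexed vertex to an odd-indexed one. For a chord $u_i u_{i+\ell}$, note that $\ell$ is odd and $k$ is even, so reduction modulo $k$ preserves parity; hence $i$ and $i + \ell \bmod k$ again have opposite parities. Consequently every edge of $C_{k,\ell}$ joins an even-indexed vertex to an odd-indexed vertex, and in particular the set $\{u_0, u_2, \ldots, u_{k-2}\}$ of even-indexed vertices is independent and has size $\frac{k}{2}$.

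Since $\frac{k}{4} \le \frac{k}{2}$, any $\frac{k}{4}$ of these even-indexed vertices form an independent set of the required size; concretely one may take $I = \{u_0, u_2, \ldots, u_{\frac{k}{2}-2}\}$, which has exactly $\frac{k}{4}$ elements. Then $\sum_{v \in I} d(v) = 4 \cdot \frac{k}{4} = k = \frac{e}{2}$, so \Cref{prop-degreeHalfEdges} gives that $C_{k,\ell}$ is balanceable. I expect no genuine obstacle here: the only point needing care is the parity claim for the chords, which uses precisely that $\ell$ is odd and $k$ is even, while the existence of an independent set of the exact size $\frac{k}{4}$ is guaranteed by the divisibility $k \equiv 0 \bmod 4$.
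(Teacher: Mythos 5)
Your proof is correct and rests on exactly the same observation as the paper's: since $\ell$ is odd and $k$ is even, every edge of $C_{k,\ell}$ joins an even-indexed vertex to an odd-indexed one, so the graph is bipartite and one can take a quarter of the vertices as an independent set meeting the hypothesis of \Cref{prop-degreeHalfEdges}. The paper merely packages this via \Cref{prop-regularBipartite4n} (bipartite $d$-regular graphs on $4n$ vertices are balanceable), whose proof is precisely the argument you inlined.
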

	
	\begin{proof}
		It is easy to see that $C_{k,\ell}$ is bipartite (the vertices with an even index being one part, and the vertices with an odd index being the other part). Furthermore, it has order $4a$ and is 4-regular. Thus, by \Cref{prop-regularBipartite4n}, $C_{k,\ell}$ is balanceable.
	\end{proof}

	\begin{lemma}
		\label{lem-circulant-4a-even}
		Let $k$ be an integer such that $k \equiv 0 \bmod 4$, and let $\ell$ be an even integer. The graph $C_{k,\ell}$ is balanceable.
	\end{lemma}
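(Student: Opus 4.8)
The plan is to apply \Cref{prop-degreeHalfEdges}. Writing $k=4a$, the graph $C_{k,\ell}$ is $4$-regular (two cycle-neighbours and two chord-neighbours per vertex) with $e=2k=8a$ edges, so any independent set $I$ satisfies $\sum_{v\in I}d(v)=4|I|$. Hence it suffices to exhibit an independent set of size exactly $a=\frac{k}{4}$; since every subset of an independent set is independent, it is enough to show $\alpha(C_{k,\ell})\ge a$. Recall that $u_i$ and $u_j$ are adjacent precisely when their indices differ by $1$ or by $\ell$ modulo $k$, so the task is to select $a$ vertices whose pairwise (circular) index differences avoid both $1$ and $\ell$.

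Because $\ell$ is even, the chords join vertices of equal index-parity while the cycle edges join vertices of opposite parity. I would first dispose of the case $\ell\equiv 2\pmod 4$: here the set $I=\{u_0,u_4,u_8,\dots,u_{4a-4}\}$ consisting of the $a$ vertices whose index is a multiple of $4$ is independent, because all of its pairwise circular index differences are multiples of $4$ (using $4\mid k$) and therefore never equal $1$ or $\ell$ (the latter since $\ell\equiv 2\pmod 4$ is not a multiple of $4$). Note this already covers the antiprism case $\ell=2$.

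The case $\ell\equiv 0\pmod 4$ is where the real work lies, since the multiples-of-$4$ construction now collides along the chords. Here I would use the parity decomposition: relabelling $v_j=u_{2j}$, the even-indexed vertices induce the circulant $\ci_{2a}(\ell/2)$, a disjoint union of cycles, and the odd-indexed vertices induce an isomorphic copy, the two halves being joined only by the $+1$ (cycle) edges. When the cycles of $\ci_{2a}(\ell/2)$ have even length — which happens exactly when $2a/\gcd(2a,\ell/2)$ is even — an alternating choice already yields $a$ pairwise non-adjacent even-indexed vertices, and we are done. Otherwise those cycles have odd length, a maximum independent set among the even-indexed vertices falls short of $a$ by $g:=\gcd(a,\ell/4)$, and I would compensate by adding $g$ odd-indexed vertices, each chosen so that both of its cycle-neighbours are unselected and so that no two selected odd vertices are joined by a chord.

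The main obstacle is precisely this last supplementation step, in the subcase $\ell\equiv 0\pmod 4$ with odd cycle lengths: one must verify that $g$ odd-indexed vertices can always be inserted without creating any $\pm1$ or $\pm\ell$ adjacency, giving a total of exactly $a$. I expect to control this through the cyclic structure of $\ci_{2a}(\ell/2)$ — bounding the number of even-indexed vertices left unselected inside each odd cycle and spacing the added odd vertices by more than $\ell$ — possibly organised as a short case analysis on the value of $g$. Once an independent set of size $a$ is produced in every case, \Cref{prop-degreeHalfEdges} immediately yields that $C_{k,\ell}$ is balanceable.
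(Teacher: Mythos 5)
Your overall strategy (build an independent set of size $a=k/4$ and invoke \Cref{prop-degreeHalfEdges}) is the same as the paper's, and two of your three cases are sound: for $\ell\equiv 2\pmod 4$ the set of indices divisible by $4$ works exactly as you say, and for $\ell\equiv 0\pmod 4$ the reduction of the even-indexed vertices to $\ci_{2a}(\ell/2)$ is correct, including the count that its components are $2\gcd(a,\ell/4)$ cycles of length $a/\gcd(a,\ell/4)$, so that an alternating choice gives $a$ vertices whenever that length is even. The problem is that in the remaining subcase --- $\ell\equiv 0\pmod 4$ with $a/\gcd(a,\ell/4)$ odd --- you do not actually produce the independent set: you only assert that the deficit of $g=\gcd(a,\ell/4)$ can be made up by inserting $g$ odd-indexed vertices, and you yourself flag this as ``the main obstacle.'' This is a genuine gap, not a routine verification. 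To insert an odd vertex $u_{2j+1}$ you need \emph{both} $u_{2j}$ and $u_{2j+2}$ to be unselected; but $v_j=u_{2j}$ and $v_{j+1}=u_{2j+2}$ generally lie in \emph{different} chord-cycles of $\ci_{2a}(\ell/2)$, so whether such a ``double gap'' exists depends on how the alternating choices in distinct components interleave around the outer cycle --- it is not controlled by looking at one odd cycle at a time. You must also arrange $g$ such insertion points pairwise at circular distance $\neq\ell$, and none of this is argued. As written, the lemma is proved only for $\ell\equiv 2\pmod 4$ and for part of the $\ell\equiv 0\pmod 4$ range.

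For comparison, the paper avoids this case split entirely with a single greedy construction: it packs runs of $b=\ell/2$ vertices spaced two apart along the outer cycle, skips one vertex between consecutive runs (so consecutive runs have opposite index parity and cannot be joined by the even chord), and then proves by a direct computation that the largest selected index $i_{\max}\le 2a+\lfloor a/b\rfloor-2$ stays below $4a-2b$, so no wrap-around adjacency with $u_0$'s chord neighbour occurs. That argument is uniform in $\ell$ and reduces the whole lemma to two easy inequalities ($2b\ge a$ and $2b<a$). If you want to salvage your decomposition approach, you would need to either prove the insertion step (perhaps by choosing the alternating sets in each chord-cycle consistently with the cyclic order of the outer cycle) or fall back on a construction like the paper's for the $\ell\equiv 0\pmod 4$, odd-cycle-length subcase.
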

	
	\begin{proof}
		We denote $k=4a$ and $\ell=2b$. The idea is to select an independent set $I$ of size $a$, allowing us to invoke \Cref{prop-degreeHalfEdges}. We start by adding to $I$ the vertices $u_0,u_2,\ldots,u_{2b-2}$, thus a set of $b$ independent vertices. However, we cannot add $u_{2b}$ because of the edge $u_0u_{2b}$. Instead, we can add to $I$ the vertices $u_{2b+1},u_{2b+3},\ldots,u_{4b-1}$, thus a set of $b$ new vertices that are independent from each other as well as from the first ones. Again, we have to jump the vertex $u_{4b+1}$ and start from $u_{4b+2}$. By applying this, we will select $\lfloor \frac{a}{b} \rfloor$ such sets of $b$ vertices, and then we can apply the same construction and add the $a-\lfloor\frac{a}{b}\rfloor b$ last vertices we need to have $|I|=a$ (those last vertices will be called \emph{leftover vertices} in the remainder of the proof). This is depicted in \Cref{fig-circulant-4a-even}.
		
		We now need to prove that $I$ is an independent set. Note that the only thing that we need to prove is that the index of the second-biggest-index neighbour of $u_0$ is greater than the index of the last vertex that we selected. Indeed, we selected the sets in such a way that all vertices are independent from each other going forward.
		
		The last vertex that is selected in $I$ with our construction will have index:
		$$ i_{max} = \left\lfloor\frac{a}{b}\right\rfloor2b + \left\lfloor\frac{a}{b}\right\rfloor + 2\left(a - \left\lfloor\frac{a}{b}\right\rfloor b\right) - 1 -1 -r$$
		with $r = 1$ if $\lfloor\frac{a}{b}\rfloor=\frac{a}{b}$ and $r=0$ otherwise.
		
		The $\lfloor\frac{a}{b}\rfloor2b$ are the vertices selected in the $\lfloor\frac{a}{b}\rfloor$ sets that are themselves separated from each other by one supplementary vertex (thus the $\lfloor\frac{a}{b}\rfloor$); then the $2(a - \lfloor\frac{a}{b}\rfloor b)$ are the leftover vertices; and then we have to subtract~1 for the last vertex (which does not count) and again substract~1 for the fact that the indices start at~0. Finally, if $\lfloor\frac{a}{b}\rfloor=\frac{a}{b}$, then there are no leftover vertices and thus we can substract~1 from the total.
		
		Thus, we have $i_{max}=2a+\frac{a}{b}-2-r \leq 2a+\frac{a}{b}-2$. We only have to prove that $i_{max}<4a-2b$ since this would prove that the last vertex that we selected has an index smaller than the second-biggest-index neighbour of $u_0$. There are two cases to consider:
		\begin{enumerate}
			\item If $2b \geq a$, then we have $\frac{a}{b} \leq 2$. Since $\lfloor \frac{a}{b} \rfloor \leq \frac{a}{b}$, we have $i_{max} \leq 2a + \lfloor \frac{a}{b} \rfloor -2 \leq 2a+2-2 = 2a$. Furthermore, since $2b<\frac{k}{2}=2a$ we have $4a-2b>2a$. 
			This implies that $i_{max}<4a-2b$, proving that $I$ is an independent set.
			\item If $2b < a$, then $i_{max} \leq 3a-2$ since $\lfloor\frac{a}{b}\rfloor \leq a$. Furthermore, $4a-2b>4a-a=3a$, and thus $i_{max}<4a-2b$, proving that $I$ is an independent set.
		\end{enumerate}
	
		Thus, $I$ is an independent set of size $a$, and since every vertex has degree~4 we have $\sum_{v \in I} d(v) = 4a = \frac{e}{2}$, and \Cref{prop-degreeHalfEdges} implies that $C_{k,\ell}$ is balanceable.
	\end{proof}
	
	\begin{figure}[h]
		\centering
		\begin{tikzpicture}
			\draw (0,0) circle (4);
			\foreach \I in {0,2,4,6,9,11,13,15,18,20} {
				\pgfmathtruncatemacro{\J}{mod(int(\I+1),40)}
				\draw[ultra thick] (360/40*\I:4) arc (360/40*\I:360/40*\J:4);
				\pgfmathtruncatemacro{\J}{mod(int(\I-1),40)}
				\draw[ultra thick] (360/40*\I:4) arc (360/40*\I:360/40*\J:4);
			}
			\foreach \I in {0,...,39} {
				\node[noeud] (u\I) at (360/40*\I:4) {};
				\node[scale=1] at (360/40*\I:4.4) {$u_{\I}$};
			}
			\foreach \I in {0,...,39} {
				\pgfmathtruncatemacro{\J}{mod(int(\I+8),40)}
				\draw[bend left] (u\I)to(u\J);
			}
			\foreach \I in {0,2,4,6,9,11,13,15,18,20} {
				\node[noeud,fill=black] (u\I) at (360/40*\I:4) {};
				\pgfmathtruncatemacro{\J}{mod(int(\I+8),40)}
				\draw[ultra thick,bend left] (u\I)to(u\J);
			}
			\foreach\I in {32,34,36,38,1,3,5,7,10,12} {
				\pgfmathtruncatemacro{\J}{mod(int(\I+8),40)}
				\draw[ultra thick,bend left] (u\I)to(u\J);
			}
		\end{tikzpicture}
		\caption{A depiction of the proof of \Cref{lem-circulant-4a-even} on $C_{40,8}$. The vertices that we selected in $I$ are in black, and the out-edges of $I$ are bolded.}
		\label{fig-circulant-4a-even}
	\end{figure}
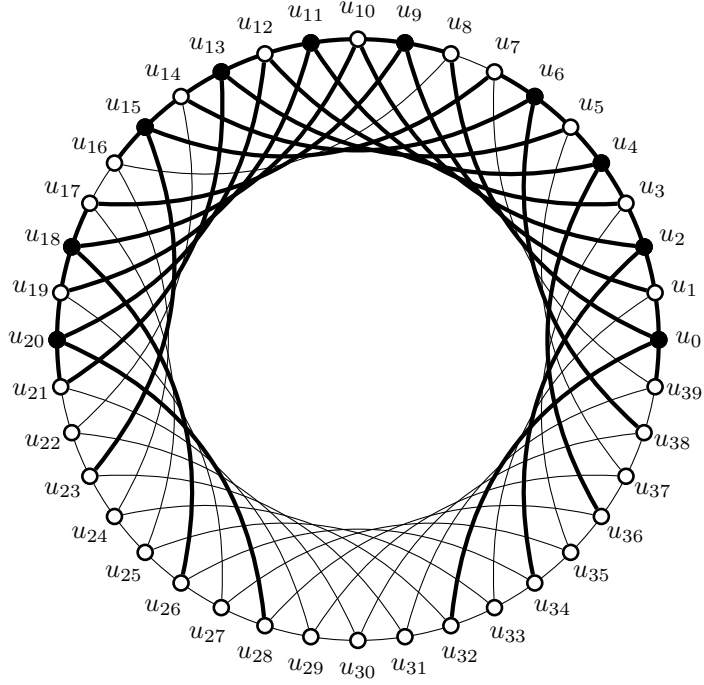

	For the next three lemmas, we cannot construct an independent set $I$ such that $\sum_{v \in I} d(v) = \frac{e}{2}$, since all the degrees are~4 and $\frac{e}{2}$ is not a multiple of~4. We will instead prove that the vertices of $C_{k,\ell}$ can be partitioned in such a way that we can apply \Cref{thm-characterization}.

	\begin{lemma}
		\label{lem-circulant-4a2-odd}
		Let $k$ be an integer such that $k \equiv 2 \bmod 4$, and let $\ell$ be an odd integer with $\ell<\frac{k}{2}$. The graph $C_{k,\ell}$ is balanceable.
	\end{lemma}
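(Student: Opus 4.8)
The plan is to verify the two conditions of \Cref{thm-characterization} directly. Writing $k=4a+2$, every vertex has degree $4$ and $e=2k$, so $\frac{e}{2}=k=4a+2$ is an integer (in fact even); hence I must exhibit a partition $V=X\cup Y$ with $e(X,Y)=k$ together with a set $W$ with $e(G[W])=k$. The structural fact I would rely on is that, since $\ell$ is odd and $k$ is even, $C_{k,\ell}$ is bipartite with the even-indexed and odd-indexed vertices as its two parts (every neighbour $u_{i\pm 1}$, $u_{i\pm\ell}$ of $u_i$ has index of opposite parity), each part having size $2a+1$. In particular, the $2a+1$ even-indexed vertices form an independent set.

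For the cut I would use the identity $e(X,Y)=4|X|-2e(G[X])$. An independent set of size $a+1$ gives $e(X,Y)=4a+4$, overshooting the target by exactly $2$, so instead I aim for a set $X$ of $a+1$ vertices inducing exactly one edge, for which $e(X,Y)=4(a+1)-2=k$. Concretely, I would take the edge $u_0u_1$ and then add $a-1$ further even-indexed vertices, chosen to avoid the other three (even) neighbours $u_2$, $u_{1+\ell}$, $u_{1-\ell}$ of $u_1$ (indices mod $k$). Because the even vertices are pairwise non-adjacent, the only induced edge is then $u_0u_1$. Establishing that this choice is possible reduces to two routine verifications: that the three forbidden neighbours are distinct and differ from $u_0$ (which fails only for $\ell=1$ or $\ell=k/2$, both outside our range), and that enough even vertices remain, i.e. $2a-3\ge a-1$. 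This last inequality is $a\ge 2$, which always holds here, since $\ell$ odd with $\ell<\frac{k}{2}$ forces $k\ge 10$.

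For the induced subgraph I would take the contiguous arc $W=\{u_0,u_1,\ldots,u_t\}$ with $t=2a+\frac{\ell+1}{2}$; this is an integer because $\ell$ is odd, and $t\le 3a<k$, so there is no wrap-around. Such an arc contains exactly $t$ cycle edges together with the $t-\ell+1$ chords $u_iu_{i+\ell}$ whose two endpoints both lie in $W$, for a total of $2t-\ell+1=k$ edges, as required. With both $X$ and $W$ in hand, \Cref{thm-characterization} yields the balanceability of $C_{k,\ell}$.

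I expect the cut to be the main obstacle: the arc count for $W$ is entirely routine, whereas forcing $e(X,Y)$ onto the target $4a+2$ (rather than a multiple of $4$) is what necessitates the ``independent set plus one controlled edge'' construction, and most of the care lies in confirming that the three neighbours of $u_1$ to be avoided are genuinely distinct from each other and from $u_0$, and that the supply of even-indexed vertices suffices.
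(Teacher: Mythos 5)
Your cut construction is correct and essentially matches the paper's: both take $X$ to be the adjacent pair $u_0,u_1$ plus $a-1$ further vertices independent from everything already chosen, giving $e(X,Y)=6+4(a-1)=4a+2$, and your bipartiteness observation makes the counting clean. The genuine gap is in the induced subgraph $W$, precisely the part you dismissed as ``entirely routine.'' The inequality $t\le 3a<k$ only shows that the arc $\{u_0,\ldots,u_t\}$ is a proper arc; it does not rule out chords that wrap around the cycle. A chord $u_iu_{(i+\ell)\bmod k}$ with $i+\ell\ge k$ has both endpoints in $W$ whenever $k-\ell\le i\le t$, and this range is nonempty as soon as $t\ge k-\ell$, i.e.\ $\ell\ge\frac{4a+3}{3}$, which happens for all large odd $\ell<\frac{k}{2}$. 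Concretely, for $C_{14,5}$ (so $a=3$, $t=9$) your set $W=\{u_0,\ldots,u_9\}$ contains the $9$ cycle edges, the $5$ chords $u_0u_5,\ldots,u_4u_9$, \emph{and} the wrap-around chord $u_9u_0$, for a total of $15\neq 14$ edges. Since $\frac{e}{2}=k$ is an exact integer here, there is no floor/ceiling slack to absorb the overshoot, and adjusting $t$ cannot fix it in general (one extra wrap chord appears per unit increase of $t$, forcing $3t+2-k=k$, which has no integer solution unless $3\mid k-1$).

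The paper avoids this entirely by building $W$ through its complement: $V\setminus W$ consists of two adjacent pairs (each deleting $7$ edges from $G$) together with $a-3$ pairwise independent vertices not adjacent to those pairs (each deleting $4$ edges), so that $e-e(G[W])=14+4(a-3)=4a+2$, with the small case $k=10$ handled by an explicit set. To repair your argument you would either need to adopt such a complement-based construction, or restrict the arc idea to $\ell\le\frac{4a+1}{3}$ and supply a separate construction for the larger odd values of $\ell$.
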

	
	\begin{proof}
		Let $k=4a+2$ and $\ell$ be an odd integer. We have two cases to consider.
		
		First, we will prove that we can partition the vertices of $C_{k,\ell}$ in two sets $X$ and $Y$ such that $e(X,Y)=\frac{e}{2}=4a+2$. We begin by setting $X:=\{u_0,u_1\}$, which puts~6 edges between $X$ and $Y$ as long as no neighbour of $u_0$ or $u_1$ is in $Y$. We now select $a-1$ independent vertices that are not neighbours of $u_0$ and $u_1$ and put them in $X$. Note that there are $4a-6$ vertices not neighbours of $u_0$ and $u_1$: $2a-3$ with an even index and $2a-3$ with an odd index. Thus, we can select $a-1$ vertices of even index (without loss of generality), which is always possible. Indeed, assume by contradiction that $a-1>2a-3$; then $a<2$, \emph{i.e.} $k<10$, \emph{i.e.} $k=6$, which is a contradiction since $\ell$ is odd, but $\ell>1$ and $\ell<\frac{k}{2}=3$ so this case cannot occur. Since the $a-1$ vertices we just added to $X$ are independent, we have $e(X,Y)=6+4(a-1)=4a+2$. This construction is depicted in \Cref{fig-circulant-4a2-odd-1}.
		
		Now, denote $C_{k,\ell}$ by $G$, its vertex-set by $V$ and its edge-set by $E$. We will prove that we can partition $V$ in two sets $W$ and $V \setminus W$ such that $e(G[W])=\frac{e}{2}=4a+2$. Note that two adjacent vertices in $V \setminus W$ independent from all other vertices in $V \setminus W$ put~7 edges in $E \setminus E(G[W])$. We will construct $V \setminus W$ by selecting two pairs of adjacent vertices independent from each other, and $a-3$ independent vertices that will not be neighbours of the four vertices previously selected. Thus, we will have $e-e(G[W])=2 \times 7+4(a-3)=4a+2$, and thus $e(G[W])=4a+2$. There are three cases to consider.
		
		First, assume that $k=10$, the only graph to consider is $C_{10,3}$. In this case, we cannot construct the sets as explained above (since $2\times7=14>10=\frac{e}{2}$). However, by setting $W=\{u_0,u_1,u_2,u_3,u_4,u_5,u_6\}$, there are~10 edges in $G[W]$, so this case is covered.
		
		Then, assume that $\ell=3$ and $k>10$. We put $u_0,u_1,u_5$ and $u_6$ in $V \setminus W$. There are $k-13=4a-11$ vertices that are neither those nor neighbours of those: $2a-5$ with an even index and $2a-6$ with an odd index. Thus, we can select $a-3$ independent vertices with an even index, which is always possible since $a-3>2a-5$ if and only if $a<2$, \emph{i.e.} $k<10$, which cannot occur as discussed previously. This implies that we have $e-e(G[W])=4a+2$. This construction is depicted on the left-hand side of \Cref{fig-circulant-4a2-odd-2}.
		
		Finally, assume that $\ell>3$ and $k>10$. We put $u_0,u_1,u_3$ and $u_4$ in $V \setminus W$. There are $k-15=4a-13$ vertices that are neither those nor neighbours of those: $2a-6$ with an even index and $2a-7$ with an odd index. Thus, we can select $a-3$ independent vertices with an even index, which is always possible. Indeed, assume by contradiction that $a-3>2a-6$, then $a<3$, \emph{i.e.} $k<14$; the case $k=6$ has been discussed previously, and the case $k=10$ cannot occur either since this would imply $\ell=5=\frac{k}{2}$, a contradiction. This implies that we have $e-e(G[W])=4a+2$. This construction is depicted on the right-hand side of \Cref{fig-circulant-4a2-odd-2}.
		
		Altogether, this allows us to invoke \Cref{thm-characterization}, and thus to conclude that the circulant graph $C_{4a+2,\ell}$ is balanceable when $\ell$ is odd and $\ell < 2a+1$.
	\end{proof}
	
	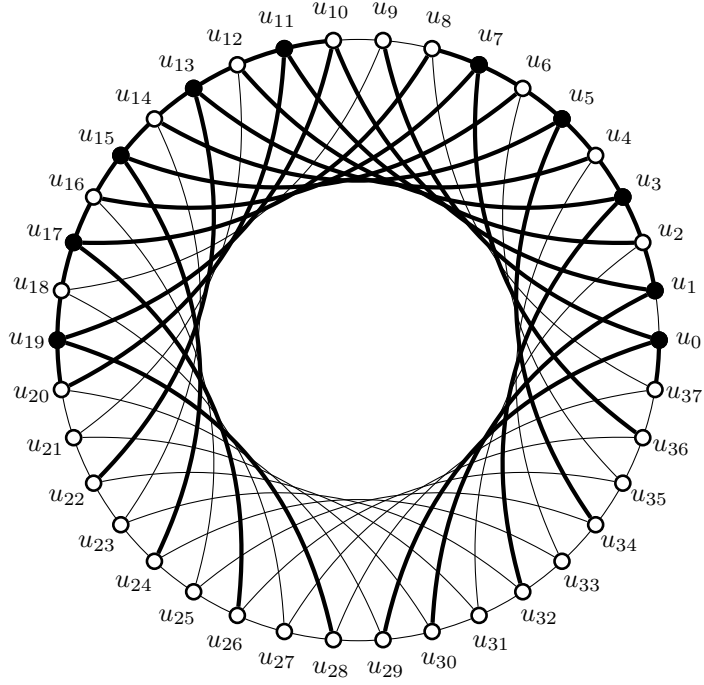
\begin{figure}[h]
		\centering
		\begin{tikzpicture}
			\draw (0,0) circle (4);
			\foreach \I in {3,5,7,11,13,15,17,19} {
				\pgfmathtruncatemacro{\J}{mod(int(\I+1),38)}
				\draw[ultra thick] (360/38*\I:4) arc (360/38*\I:360/38*\J:4);
				\pgfmathtruncatemacro{\J}{mod(int(\I-1),38)}
				\draw[ultra thick] (360/38*\I:4) arc (360/38*\I:360/38*\J:4);
			}
			\draw[ultra thick] (360/38*1:4) arc (360/38*1:360/38*2:4);
			\draw[ultra thick] (360/38*37:4) arc (360/38*37:360/38*38:4);
			\foreach \I in {0,...,37} {
				\node[noeud] (u\I) at (360/38*\I:4) {};
				\node[scale=1] at (360/38*\I:4.4) {$u_{\I}$};
			}
			\foreach \I in {0,...,37} {
				\pgfmathtruncatemacro{\J}{mod(int(\I+9),38)}
				\draw[bend left] (u\I)to(u\J);
			}
			\foreach \I in {0,1,3,5,7,11,13,15,17,19} {
				\node[noeud,fill=black] (u\I) at (360/38*\I:4) {};
				\pgfmathtruncatemacro{\J}{mod(int(\I+9),38)}
				\draw[ultra thick,bend left] (u\I)to(u\J);
			}
			\foreach\I in {29,30,32,34,36,2,4,6,8,10} {
				\pgfmathtruncatemacro{\J}{mod(int(\I+9),38)}
				\draw[ultra thick,bend left] (u\I)to(u\J);
			}
		\end{tikzpicture}
		\caption{A depiction of the first case of the proof of \Cref{lem-circulant-4a2-odd} on $C_{38,9}$. The vertices that we selected in $X$ are in black, and the edges between $X$ and $Y$ are bolded.}
		\label{fig-circulant-4a2-odd-1}
	\end{figure}
	
	\begin{figure}[h]
		\begin{bigcenter}
			\begin{tikzpicture}
			
			\node (lIs3) at (0,0) {
				\begin{tikzpicture}
				\draw (0,0) circle (4);
				\foreach \I in {0,1,5,6,10,12,14,16,18,20} {
					\pgfmathtruncatemacro{\J}{mod(int(\I+1),38)}
					\draw[ultra thick] (360/38*\I:4) arc (360/38*\I:360/38*\J:4);
					\pgfmathtruncatemacro{\J}{mod(int(\I-1),38)}
					\draw[ultra thick] (360/38*\I:4) arc (360/38*\I:360/38*\J:4);
				}
				\foreach \I in {0,...,37} {
					\node[noeud] (u\I) at (360/38*\I:4) {};
					\node[scale=1] at (360/38*\I:4.4) {$u_{\I}$};
				}
				\foreach \I in {0,...,37} {
					\pgfmathtruncatemacro{\J}{mod(int(\I+3),38)}
					\draw[bend left] (u\I)to(u\J);
				}
				\foreach \I in {0,1,5,6,10,12,14,16,18,20} {
					\node[noeud,fill=black] (u\I) at (360/38*\I:4) {};
					\pgfmathtruncatemacro{\J}{mod(int(\I+3),38)}
					\draw[ultra thick,bend left] (u\I)to(u\J);
				}
				\foreach\I in {35,36,2,3,7,9,11,13,15,17} {
					\pgfmathtruncatemacro{\J}{mod(int(\I+3),38)}
					\draw[ultra thick,bend left] (u\I)to(u\J);
				}
				\end{tikzpicture}
			};
		
			\node (lIsMore) at (10,0) {
				\begin{tikzpicture}
				\draw (0,0) circle (4);
				\foreach \I in {0,1,3,4,6,8,14,16,18,20} {
					\pgfmathtruncatemacro{\J}{mod(int(\I+1),38)}
					\draw[ultra thick] (360/38*\I:4) arc (360/38*\I:360/38*\J:4);
					\pgfmathtruncatemacro{\J}{mod(int(\I-1),38)}
					\draw[ultra thick] (360/38*\I:4) arc (360/38*\I:360/38*\J:4);
				}
				\foreach \I in {0,...,37} {
					\node[noeud] (u\I) at (360/38*\I:4) {};
					\node[scale=1] at (360/38*\I:4.4) {$u_{\I}$};
				}
				\foreach \I in {0,...,37} {
					\pgfmathtruncatemacro{\J}{mod(int(\I+9),38)}
					\draw[bend left] (u\I)to(u\J);
				}
				\foreach \I in {0,1,3,4,6,8,14,16,18,20} {
					\node[noeud,fill=black] (u\I) at (360/38*\I:4) {};
					\pgfmathtruncatemacro{\J}{mod(int(\I+9),38)}
					\draw[ultra thick,bend left] (u\I)to(u\J);
				}
				\foreach\I in {29,30,32,33,35,37,5,7,9,11} {
					\pgfmathtruncatemacro{\J}{mod(int(\I+9),38)}
					\draw[ultra thick,bend left] (u\I)to(u\J);
				}
				\end{tikzpicture}
			};
			
			\end{tikzpicture}
			\caption{A depiction of the second case of the proof of \Cref{lem-circulant-4a2-odd} on $C_{38,3}$ and $C_{38,9}$. The vertices that we selected in $V \setminus W$ are in black, and the edges outside of $G[W]$ are bolded.}
			\label{fig-circulant-4a2-odd-2}
		\end{bigcenter}
	\end{figure}

	\begin{lemma}
		\label{lem-circulant-4a2-2}
		Let $k$ be an integer such that $k \equiv 2 \bmod 4$. The graph $C_{k,2}$ is balanceable if and only if $k \neq 6$.
	\end{lemma}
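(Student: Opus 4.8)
The plan is to split the statement along its two directions and, in both, to exploit the characterization of \Cref{thm-characterization}: since $k=4a+2$, every vertex has degree $4$ and $e=2k=8a+4$, so balanceability amounts to producing a cut carrying exactly $\frac{e}{2}=4a+2$ edges together with an induced subgraph containing exactly $4a+2$ edges. The value $\frac{e}{2}=4a+2$ is even, so the parity obstruction of \Cref{prop-halfEdgesOddAndEulerian} is unavailable; one checks that \Cref{cor-regular} also fails to apply (here $d=4\equiv 0\bmod 4$ with $d/4=1$ odd, while $n=k\equiv 2\bmod 4$, matching neither case). Hence both directions must be argued by direct construction or direct refutation.

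I would first dispose of the non-balanceable case $k=6$, where $C_{6,2}$ is the octahedron: $4$-regular on $6$ vertices with $12$ edges, so $\frac{e}{2}=6$. I would show that the induced-subgraph half of \Cref{thm-characterization} cannot be met, i.e. that no $W\subseteq V$ has $e(G[W])=6$, by a short finite check organized by $|W|$. For $|W|\le 3$ at most $\binom{3}{2}=3<6$ edges occur; for $|W|=6$ and $|W|=5$ one is left with $12$ or (deleting one degree-$4$ vertex) $8$ edges; and for $|W|=4$ the two deleted vertices form either an adjacent pair (removing $4+4-1=7$ edges, leaving $5$) or the unique antipodal non-adjacent pair (removing $4+4=8$ edges, leaving $4$). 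Since $6$ never occurs, \Cref{thm-characterization} yields non-balanceability. I expect the main subtlety here to be the realization that no standard parity criterion applies, forcing this explicit enumeration.

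For $a\ge 2$ (that is, $k\ge 10$) I would exhibit both substructures. For the cut I would take $X=\{u_0,u_1\}$ together with $a-1$ further vertices that are pairwise at distance $\ge 3$ and at distance $\ge 3$ from $u_0$ and $u_1$; then $e(G[X])=1$ (only the edge $u_0u_1$), so $e(X,Y)=4(a+1)-2=4a+2$. Feasibility is routine: the vertices within distance $2$ of $\{u_0,u_1\}$ form a block of $6$ consecutive indices, leaving an arc of $4a-4$ consecutive vertices inside which the greedy choice $u_4,u_7,u_{10},\dots$ supplies $a-1$ pairwise-independent vertices for every $a\ge 2$.

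For the induced subgraph I would use that a contiguous arc of $m$ vertices induces $2m-3$ edges in $C_{k,2}$, while a lone adjacent pair detached from everything else induces one edge. Taking $W=\{u_0,\dots,u_{2a+1}\}$ (contributing $4a+1$ edges) together with the separated edge $\{u_{2a+4},u_{2a+5}\}$ gives $e(G[W])=4a+2$; the detached edge sits at distance $\ge 3$ from the arc precisely when $a\ge 3$. The remaining case $a=2$ ($k=10$) is what I regard as the genuine obstacle on the positive side, because the four vertices outside a $6$-vertex arc are each adjacent to that arc, so no detached edge exists. There I would instead delete a set $S$ inducing exactly two edges, namely $S=\{u_0,u_1,u_3\}$ (the path $u_0u_1u_3$): deleting it removes $4\cdot 3-2=10=\frac{e}{2}$ edges, so $W=V\setminus S$ satisfies $e(G[W])=10$. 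With the cut and the induced subgraph both in hand, \Cref{thm-characterization} gives balanceability for all $k\ge 10$, completing the characterization.
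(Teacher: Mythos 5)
Your proposal is correct and follows essentially the same route as the paper: an exhaustive check of all induced subgraphs to rule out $C_{6,2}$, then for $k\ge 10$ an explicit cut (one adjacent pair plus $a-1$ scattered independent vertices) and an explicit induced subgraph, with $k=10$ handled as a special case, all fed into \Cref{thm-characterization}. The only cosmetic difference is your induced subgraph for $a\ge 3$ (a contiguous arc of $2a+2$ vertices plus one detached edge, giving $(4a+1)+1$ edges directly) versus the paper's complement-based construction ($a-3$ independent vertices plus two adjacent pairs removed), and both verify correctly.
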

	
	\begin{proof}
		This proof contains two parts: first, we will prove that $C_{6,2}$ is not balanceable; then, we will prove that $C_{4a+2,2}$ is balanceable when $a>1$.
		
		First, assume that $k=6$. We will prove that there is no subset of vertices $W$ such that $e(G[W])=6$. First, note that $W$ cannot possibly be empty or all the vertices. Taking this into account, \Cref{tab-circulant-6-2} shows possible sets for different sizes of $W$ as well as $e(G[W])$ in each case (the possible sets are up to renaming vertices). Since no set $W$ gives $e(G[W])=6$, \Cref{thm-characterization} implies that $C_{6,2}$ is not balanceable.
		
		\begin{table}[h]
			\begin{bigcenter}
				\begin{tabu} {|[1pt]c|[1pt]c|c|c|c|c|}
					\tabucline[1pt]{-}
					$|W|$ & 1 & 2 & 3 & 4 & 5 \\ \tabucline[1pt]{-}
					\makecell{Possible\\vertices in $W$\\$\rightarrow e(G[W])$} & $u_0 \rightarrow 0$ & \makecell{$u_0,u_1 \rightarrow 1$\\$u_0,u_2 \rightarrow 1$\\$u_0,u_3 \rightarrow 0$} & \makecell{$u_0,u_1,u_2 \rightarrow 3$\\$u_0,u_1,u_3 \rightarrow 2$\\$u_0,u_2,u_4 \rightarrow 3$} & \makecell{$u_0,u_1,u_2,u_3 \rightarrow 5$\\$u_0,u_1,u_2,u_4 \rightarrow 5$\\$u_0,u_1,u_3,u_4 \rightarrow 4$} & $u_0,\ldots,u_4 \rightarrow 8$ \\ \tabucline[1pt]{-}
				\end{tabu}
				\caption{Possible sets $W$ of vertices of $C_{6,2}$ (up to renaming the vertices), and the value of $e(G[W])$ for each of them.}
				\label{tab-circulant-6-2}
			\end{bigcenter}
		\end{table}
		
		Now, assume that $a>1$, we will prove that $C_{4a+2,2}$ is balanceable. The proof is similar to the proof of \Cref{lem-circulant-4a2-odd}.
		
		First, we will prove that we can partition the vertices of $C_{4a+2,2}$ in two sets $X$ and $Y$ such that $e(X,Y)=4a+2$. For this, we set $X_1=\{u_0,u_3,\ldots,u_{3(a-2)}\}$, $X_2=\{u_{4a-1},u_{4a}\}$, and $X = X_1 \cup X_2$. It is easy to see that $X_1$ is an independent set and that no vertex in $X_2$ is adjacent to a vertex in $X_1$ (since we have $a>1 \Rightarrow 4a-3>3a-2>3(a-2)$). Thus, we have $e(X,Y)=e(X_1,Y)+e(X_2,Y)=4(a-1)+6=4a+2$.
		
		Then, as in the proof of \Cref{lem-circulant-4a2-odd}, assume that $a=2$, \emph{i.e.} $k=10$. If we set $W=\{u_0,u_1,u_2,u_3,u_4,u_6,u_8\}$, then we have $e(G[W])=10$. This, with the previous point (that applies if $a=2$), proves that $C_{10,2}$ is balanceable. Assume now that $a>2$. Let $V_1 = \{u_0,u_3,\ldots,u_{3(a-4)}\}$ (if $a=3$ then we set $V_1 = \emptyset$), $V_2 = \{u_{4a-6},u_{4a-5}\}$ and $V_3=\{u_{4a-2},u_{4a-1}\}$; then set $W = V \setminus (V_1 \cup V_2 \cup V_3)$. It is easy to see that $V_1$ is an independent set and that no vertex in $V_2$ (resp. $V_3$) is adjacent to a vertex in $V_1$ or $V_3$ (resp. $V_1$ or $V_2$), since we have $a>2 \Rightarrow 4a-8>3a-6>3(a-4)$. Thus, we have $e-e(W)=4(a-3)+14=4a+2$, which implies $e(G[W])=4a+2$.
		
		The above constructions allow us to invoke \Cref{thm-characterization}, which implies that $C_{4a+2,2}$ is balanceable.
	\end{proof}

	\begin{lemma}
		\label{lem-circulant-4a2-even}
		Let $k$ be an integer such that $k \equiv 2 \bmod 4$, and let $\ell$ be an even integer such that $\ell>2$. The graph $C_{k,\ell}$ is balanceable.
	\end{lemma}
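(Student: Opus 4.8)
The plan is to apply the characterization of \Cref{thm-characterization} through explicit constructions, exactly in the spirit of \Cref{lem-circulant-4a2-odd}. Write $k=4a+2$ and $\ell=2b$; since $\ell$ is even with $\ell>2$ and $\ell<\frac{k}{2}=2a+1$, we have $2\le b\le a$. As $\ell<\frac{k}{2}$, every vertex of $C_{k,\ell}$ has degree~$4$, so $e=2k=8a+4$ and $\frac{e}{2}=4a+2$ is an integer (indeed even, so floor and ceiling coincide). Thus it suffices to produce (i) a partition $V=X\cup Y$ with $e(X,Y)=4a+2$, and (ii) a set $W\subseteq V$ with $e(G[W])=4a+2$, after which \Cref{thm-characterization} gives the result.

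For both constructions I would reuse the two elementary counting gadgets from \Cref{lem-circulant-4a2-odd}. For the cut, I would take $X=\{u_0,u_1\}\cup I$, where $I$ is an independent set of size $a-1$ that avoids $u_0,u_1$ and all their neighbours. The adjacent pair $\{u_0,u_1\}$ contributes $6$ edges to $Y$ and each vertex of $I$ contributes $4$, giving $e(X,Y)=6+4(a-1)=4a+2$. For the induced subgraph, I would place into $S=V\setminus W$ two cycle-adjacent pairs together with $a-3$ further independent vertices, arranged so that the only edges inside $S$ are the two designated pair-edges. Then $|S|=a+1$ and $S$ has exactly $2$ internal edges, so the number of edges incident to $S$ is $4|S|-2=4a+2$, whence $e(G[W])=e-(4a+2)=4a+2$.

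The main obstacle, and the genuinely new part compared with the odd-$\ell$ lemma, is the construction of the required independent sets. When $\ell$ is odd the chords join vertices of opposite parity, so the even-indexed vertices are automatically independent; when $\ell=2b$ is even the chords join vertices of \emph{equal} parity, and the naive ``every other vertex'' set fails because $u_{2i}$ and $u_{2i+2b}$ are chord-adjacent. I would therefore import the batched alternation used in \Cref{lem-circulant-4a-even}: select a block of $b$ consecutive same-parity vertices (which, spanning fewer than $2b$ indices, meets no chord and no cycle edge), then flip parity and skip one vertex before the next block, so that the step-$2b$ chords are always dodged; iterating this yields an independent set of density roughly $\frac12$. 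Since here I need only $a-1$ (respectively $a-3$) vertices while the independence number is about $2a+1$, there is ample slack, so the blocks can be positioned to also avoid the handful of forbidden neighbours of the base vertices. The delicate verifications are that independence is preserved across block boundaries and around the cyclic wrap-around — an $i_{\max}$-type inequality analogous to the check $i_{max}<4a-2b$ in \Cref{lem-circulant-4a-even} — and that these hold uniformly for every $b$ with $2\le b\le a$.

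Finally, I would isolate the degenerate small case $a=2$, which forces $\ell=4$ and hence the single graph $C_{10,4}$: here $a-3<0$, so the part-(ii) gadget is unavailable and I would simply exhibit explicit sets $X$ and $W$ by hand (as was done for $C_{10,3}$ and $C_{10,2}$ in the previous two lemmas). For $a\ge 3$ it may be cleanest to split according to the size of $b$ and to where the forbidden neighbours of the base vertices fall, mirroring the $\ell=3$ versus $\ell>3$ split of \Cref{lem-circulant-4a2-odd}, before concluding in every case via \Cref{thm-characterization}. I expect the bookkeeping of the batched independent set together with the neighbour-avoidance constraints to be the part requiring the most care.
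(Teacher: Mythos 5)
Your proposal follows essentially the same route as the paper's proof: the same two counting gadgets (one adjacent pair plus $a-1$ independent vertices giving $e(X,Y)=6+4(a-1)=4a+2$; two adjacent pairs plus $a-3$ independent vertices giving $4(a+1)-2=4a+2$ edges leaving $W$), the same reuse of the batched independent-set construction of \Cref{lem-circulant-4a-even} to dodge the same-parity chords, and the same isolated treatment of $C_{10,4}$. The one part you defer --- the $i_{\max}$-type verification that the blocks remain independent of each other, of the adjacent pairs, and across the wrap-around, uniformly for $2\le b\le a$ --- is exactly the case analysis ($\ell=4$, $\ell\ge a$, $4<\ell<a$, plus small $a$) that occupies most of the paper's proof, so it is a genuine obligation rather than a formality, but your outline is correct and correctly locates where the work lies.
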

	
	\begin{proof}
		Let $k=4a+2$ and $\ell=2b$.
		The proof for this lemma is a mix of the proofs for \Cref{lem-circulant-4a-even,lem-circulant-4a2-2}: we will use the structure we constructed in the proof for \Cref{lem-circulant-4a-even} and add either one or two independent edges to it, modifying the structure to keep everything independent from each other.
		
		First, we construct $X$ in two steps. We begin by creating a set $X_1$ by applying the same construction than in the proof of \Cref{lem-circulant-4a-even} (so several sets of $\frac{\ell}{2}$ vertices at distance~2 along the outer cycle from each other, each set being separated from the others by another vertex): a total of $a-1$ such vertices are added to $X_1$. Then, let $X_2 := \{u_{4a-1},u_{4a}\}$. Now, we need $X_1$ and $X_2$ to be independent from each other, so if a vertex in $X_1$ is adjacent to a vertex in $X_2$ (this can happen to at most one vertex), we remove it from $X_1$ and add to this set the next vertex in the construction described in the proof of \Cref{lem-circulant-4a-even} (we may start a new set this way). This is depicted on the left-hand side of \Cref{fig-circulant-4a2-even}.
		The last vertex that is selected in $X_1$ with our construction will have index:
		$$ i_{max} \leq 2a + \left\lfloor\frac{a}{b}\right\rfloor-2+3-2.$$
		That is, the same maximum index than in the proof of \Cref{lem-circulant-4a-even}, but with two corrections: $+3$ may happen since we could start a new set by shifting the neighbour of either $u_{4a-1}$ or $u_{4a}$ (this gives us~$+2$, and may give us an additional~$+1$ if the vertex we shift creates a new set), and $-2$ since we only need $a-1$ vertices in $X_1$ (instead of the $a$ from the proof of \Cref{lem-circulant-4a-even}). Now, we need to prove that this last index is less than $4a-1-\ell$.
		
		If $\ell \geq a$, then we can check that we will always have $i_{max}=2a-3 < 2a-2 < 4a-1-\ell$ since $\ell < \frac{k}{2} = 2a+1$. Indeed, we will put in $X_1$ first the $b-1$ vertices $u_0,u_2,\ldots,u_{\ell-4}$, then the $a-b$ vertices $u_{\ell-1},u_{\ell+1},\ldots,u_{\ell+2(a-b)-3}$ (which is always possible since $\ell \geq a$). The last index will thus always be $\ell+2(a-b)-3=2a-3$.
		
		Assume now that $\ell<a$. Since $\ell>2$, we have $b\geq 2$ and thus $\lfloor\frac{a}{b}\rfloor \leq \lfloor\frac{a}{2}\rfloor \leq \frac{a}{2}+1$. Thus, $i_{max} \leq 2a+\lfloor\frac{a}{b}\rfloor-1 \leq 2a + \frac{a}{2} +1-1 = \frac{5a}{2} < 3a \leq 4a-1-\ell$.
		
		Hence, in this construction, $X_1$ and $X_2$ are independent from each other, and by setting $X = X_1 \cup X_2$ we have $e(X,Y)=6+4(a-1)=4a+2$.
		
		Now, as in the proof of \Cref{lem-circulant-4a2-odd}, we have to deal with the case of $C_{10,4}$. In this case, by setting $W:=\{u_0,u_1,u_2,u_4,u_5,u_6,u_8\}$, we have $e(G[W])=10$.
		
		Finally, for $k \geq 14$, we construct $V \setminus W$ by applying the same construction than in the proof of \Cref{lem-circulant-4a-even}. Let $V_1$ be a set of $a-3$ vertices constructed this way, then let either $V_2=\{u_{4a-4,4a-3}\}$ (if $\ell > 4$) or $V_2 = \{u_{4a-7},u_{4a-6}\}$ (if $\ell=4$), and $V_3=\{u_{4a-1},u_{4a}\}$. Again, we shift the potential vertices in $V_1$ adjacent to a vertex in $V_2$ or $V_3$ (at most~2 such vertices), and thus the highest index we can reach is:
		$$ i_{max} \leq 2a + \left\lfloor\frac{a}{b}\right\rfloor-2+6-6.$$
		The $+6$ comes from the two potential shifts, and the $-6$ from the fact that we select $a-3$ vertices instead of $a$. We now need to verify that $i_{max}<4a-i-\ell$ for $i \in \{4,7\}$ (depending on the value of $\ell$). We have three cases to check:
		\begin{enumerate}
			\item If $\ell=4$, then $i_{max} \leq 2a+\lfloor\frac{a}{2}\rfloor-2$; and $4a-7-\ell=4a-11$. Now, if $a > 6$ then since $\lfloor \frac{a}{2} \rfloor \leq \frac{a}{2}$ it is easy to check that $i_{max} \leq 2a + \frac{a}{2} -2 < 4a-11$. We need to check that $i_{max}<4a-11$ in the remaining cases:
			\begin{enumerate}
				\item If $a=3$, then we have $V_1=\emptyset$ so no contradiction arises;
				\item If $a=4$, then we have $i_{max}=0$ and $4a-11=5$ so no contradiction arises;
				\item If $a=5$, then we have $i_{max}=3$ and $4a-11=9$ so no contradiction arises;
				\item If $a=6$, then we have $i_{max}=5$ and $4a-11=13$ so no contradiction arises.
			\end{enumerate}
			Thus, if $\ell=4$, then $i_{max}<4a-7-\ell$.
			\item If $\ell \geq a$, then we can check that we will always have $i_{max}=2a-5$ and $4a-4-\ell \geq 2a-4$ since $\ell < \frac{k}{2} = 2a+1$. Indeed, we will put in $V_1$ first the $b-3$ vertices $u_0,u_2,\ldots,u_{\ell-8}$ as well as $u_{\ell-4}$, then the $a-b-1$ vertices $u_{\ell-1},u_{\ell+1},\ldots,u_{\ell+2(a-b)-5}$ (which is always possible since $\ell \geq a$). The last index will thus always be $\ell+2(a-b)-5=2a-5$.
			\item If $\ell>4$ (thus $b>2$) and $\ell<a$, then since $\lfloor \frac{a}{b} \rfloor < \frac{a}{2}$ we have $i_{max} \leq 2a+\lfloor \frac{a}{b} \rfloor-2 < \frac{5a}{2}-2$; and $4a-4-\ell > 3a-4$. Now, we know that $a>\ell>4$, so it is easy to check that $\frac{5a}{2}-2<3a-4$, and thus, that $i_{max}<4a-4-\ell$.
		\end{enumerate}
		All those cases prove that $V_2$ and $V_3$ are independent from $V_1$. By setting $V \setminus W = V_1 \cup V_2 \cup V_3$, we have $e-e(G[W])=14+4(a-3)=4a+2$, and thus $e(G[W])=4a+2$.
		
		Those two constructions, depicted in \Cref{fig-circulant-4a2-even}, allow us to invoke \Cref{thm-characterization}, which implies that $C_{4a+2,\ell}$ is balanceable when $\ell$ is even and $\ell>2$.
	\end{proof}

	\begin{figure}[h]
		\begin{bigcenter}
			\begin{tikzpicture}
				\node (case1) at (0,0) {
					\begin{tikzpicture}
					\draw (0,0) circle (4);
					\foreach \I in {0,2,4,7,9,11,13,16} {
						\pgfmathtruncatemacro{\J}{mod(int(\I+1),38)}
						\draw[ultra thick] (360/38*\I:4) arc (360/38*\I:360/38*\J:4);
						\pgfmathtruncatemacro{\J}{mod(int(\I-1),38)}
						\draw[ultra thick] (360/38*\I:4) arc (360/38*\I:360/38*\J:4);
					}
					\draw[ultra thick] (360/38*36:4) arc (360/38*36:360/38*37:4);
					\draw[ultra thick] (360/38*34:4) arc (360/38*34:360/38*35:4);
					\foreach \I in {0,...,37} {
						\node[noeud] (u\I) at (360/38*\I:4) {};
						\node[scale=1] at (360/38*\I:4.4) {$u_{\I}$};
					}
					\foreach \I in {0,...,37} {
						\pgfmathtruncatemacro{\J}{mod(int(\I+8),38)}
						\draw[bend left] (u\I)to(u\J);
					}
					\foreach \I in {0,2,4,7,9,11,13,16,35,36} {
						\node[noeud,fill=black] (u\I) at (360/38*\I:4) {};
						\pgfmathtruncatemacro{\J}{mod(int(\I+8),38)}
						\draw[ultra thick,bend left] (u\I)to(u\J);
					}
					\foreach\I in {30,32,34,37,1,3,5,8,27,28} {
						\pgfmathtruncatemacro{\J}{mod(int(\I+8),38)}
						\draw[ultra thick,bend left] (u\I)to(u\J);
					}
					\end{tikzpicture}
				};
				
				\node (case2) at (10,0) {
					\begin{tikzpicture}
					\draw (0,0) circle (4);
					\foreach \I in {0,4,7,9,11,13,32,33,35,36} {
						\pgfmathtruncatemacro{\J}{mod(int(\I+1),38)}
						\draw[ultra thick] (360/38*\I:4) arc (360/38*\I:360/38*\J:4);
						\pgfmathtruncatemacro{\J}{mod(int(\I-1),38)}
						\draw[ultra thick] (360/38*\I:4) arc (360/38*\I:360/38*\J:4);
					}
					\foreach \I in {0,...,37} {
						\node[noeud] (u\I) at (360/38*\I:4) {};
						\node[scale=1] at (360/38*\I:4.4) {$u_{\I}$};
					}
					\foreach \I in {0,...,37} {
						\pgfmathtruncatemacro{\J}{mod(int(\I+8),38)}
						\draw[bend left] (u\I)to(u\J);
					}
					\foreach \I in {0,4,7,9,11,13,32,33,35,36} {
						\node[noeud,fill=black] (u\I) at (360/38*\I:4) {};
						\pgfmathtruncatemacro{\J}{mod(int(\I+8),38)}
						\draw[ultra thick,bend left] (u\I)to(u\J);
					}
					\foreach\I in {30,34,37,1,3,5,24,25,27,28} {
						\pgfmathtruncatemacro{\J}{mod(int(\I+8),38)}
						\draw[ultra thick,bend left] (u\I)to(u\J);
					}
					\end{tikzpicture}
				};
			\end{tikzpicture}
			\caption{A depiction of the proof of \Cref{lem-circulant-4a2-even} on $C_{38,8}$. On the left-hand side, the vertices in $X$ are bolded, as well as the edges between $X$ and $Y$. On the right-hand side, the vertices in $V \setminus W$ are bolded, as well as the edges outside $G[W]$.}
			\label{fig-circulant-4a2-even}
		\end{bigcenter}
	\end{figure}

	Together, \Cref{lem-circulant-k-k/2,lem-circulant-k-k/2-odd,lem-circulant-odd-l,lem-circulant-4a-odd,lem-circulant-4a-even,lem-circulant-4a2-odd,lem-circulant-4a2-2,lem-circulant-4a2-even} prove the validity of \Cref{thm-circulant}, which fully characterizes, among this special class of circulant graphs, which are balanceable and which are not. In particular, note that every M\"obius ladder is balanceable (by \Cref{lem-circulant-k-k/2,lem-circulant-k-k/2-odd}) and that the 3-antiprism graph is the only non-balanceable antiprism graph (by \Cref{lem-circulant-4a2-2} and a special case of \Cref{lem-circulant-4a-even}).
	
	With this special class of circulant graphs, we get one step closer towards a full classification of the balanceability of circulant graphs, which had been informally started by the study of complete graphs and cycles. This full classification is an interesting open problem for future studies.
	
	\section{Balanceability of grids}
	\label{sec-grids}
	
	In this section, we study the balanceability of grid graphs with an even edge number. In particular, we study rectangular and triangular grids.
	
	\subsection{Rectangular grids}
	\label{subsec-rectangularGrids}
	
	Let $G_{k,\ell}$ be the rectangular grid graph with $k$ vertices per row and $\ell$ vertices per column. It is easy to see that $G_{k,\ell}$ has $k(\ell-1)+(k-1)\ell=2k\ell-(k+\ell)$ edges, and this number is even if and only if $k$ and $\ell$ have the same parity.
	
	\begin{theorem}
		\label{thm-rectangularGrids}
		Let $k$  and $\ell$ be two integers such that $k,\ell > 1$.
		If $k$ and $\ell$ have the same parity, then $G_{k,\ell}$ is balanceable.
	\end{theorem}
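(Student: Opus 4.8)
The plan is to invoke the characterization of \Cref{thm-characterization}. Since $k$ and $\ell$ have the same parity, $|E|=2k\ell-(k+\ell)$ is even, so it suffices to exhibit a partition $V=X\cup Y$ with $e(X,Y)=\frac{|E|}{2}$ together with a set $W$ with $e(G[W])=\frac{|E|}{2}$. Rather than building these two structures separately, I would aim for the stronger conclusion of \Cref{prop-degreeHalfEdges}: I would produce an independent set $I$ with $\sum_{v\in I}d(v)=\frac{|E|}{2}$, which yields both required structures at once. This is natural here because $G_{k,\ell}$ is bipartite. Writing $A$ and $B$ for the two classes of the checkerboard coloring (the vertices $(i,j)$ with $i+j$ even, respectively odd), each class is an independent set, and since every edge has exactly one endpoint in each class, $\sum_{v\in A}d(v)=\sum_{v\in B}d(v)=|E|$. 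Hence the whole problem reduces to selecting a subset $I\subseteq A$ whose degrees sum to exactly half of the class total.

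To carry this out I would use the three degree values available in a grid: the four corners have degree $2$, the remaining boundary vertices have degree $3$, and the interior vertices have degree $4$. The degrees $2$ and $4$ let me adjust the running sum in even steps, while the degree-$3$ boundary vertices let me correct the parity. Concretely, I would fix a large block of same-colored interior vertices to approach $\frac{|E|}{2}$ from below, and then add a few corner and boundary vertices of class $A$ to land exactly on the target; since any subset of $A$ is automatically independent, independence is never in question. I would separate the two cases $k,\ell$ both even and $k,\ell$ both odd, because the number of corners, boundary, and interior vertices falling into the chosen class $A$ depends on these parities. A short computation gives $\frac{|E|}{2}=k\ell-\frac{k+\ell}{2}$, whose parity dictates whether a degree-$3$ vertex of $A$ must be included in $I$ to match parity.

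The main obstacle is the exact attainability of the target $\frac{|E|}{2}$ under the supply constraints: I must ensure that class $A$ actually contains enough vertices of each degree to realize this value, and that at least one degree-$3$ vertex of $A$ is available whenever $\frac{|E|}{2}$ is odd. This is unproblematic for genuinely two-dimensional grids but needs care in the degenerate thin cases where $k$ or $\ell$ equals $2$ (ladders, which have no interior vertices and only degrees $2$ and $3$) or $3$; for these small cases I would simply verify the selection directly. Note that no obstruction in the style of \Cref{prop-halfEdgesOddAndEulerian} can interfere, since a grid with boundary vertices is not eulerian. As a robust fallback, should the subset-sum bookkeeping become awkward in some case, I would instead interpolate the two structures of \Cref{thm-characterization}: starting from the bipartition $(A,B)$, whose cut crosses all $|E|$ edges, I would move vertices across the partition one at a time, each move changing the number of crossing edges by a controlled small amount, so that the cut value can be steered down to exactly $\frac{|E|}{2}$; a similar incremental construction that grows an induced subgraph vertex by vertex reaches $e(G[W])=\frac{|E|}{2}$.
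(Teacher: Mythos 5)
Your proposal follows essentially the same route as the paper: both reduce the statement to \Cref{prop-degreeHalfEdges} by selecting, inside one class of the checkerboard bipartition, a mix of degree-$2$ corners, degree-$3$ side vertices and degree-$4$ interior vertices whose degrees sum to exactly $\frac{|E|}{2}$, with the degree-$3$ vertices used to fix parity and a case split on the parities of $k$ and $\ell$. The paper just makes the bookkeeping explicit (three cases, with exact counts of how many vertices of each degree to take), which is the part you defer; your plan is sound and matches it.
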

	
	\begin{proof}
		In the grid graph $G_{k,\ell}$ with vertex-set $V$ and edge-set $E$, vertices can have degree two, three or four. The repartition is as follows:
		\begin{itemize}
			\item $4$ vertices of degree two (the corners);
			\item $2(k-2)+2(\ell-2)=2(k+\ell)-8$ vertices of degree three (the sides);
			\item $k\ell - 2(k+\ell)+4$ vertices of degree four (the inside).
		\end{itemize}
		It is well-known that $\sum_{v \in V} d(v)=2|E|$. We want to find an independent set of vertices $I$ such that $\sum_{v \in I} d(v) = \frac{|E|}{2}$. To do this, we can select one fourth of the vertices in every degree set. There are several cases.
		\ \\
		
		\noindent\textbf{Case 1:} If $k$ and $\ell$ are even, then we can select 1 vertex of degree two, $\frac{k+\ell}{2}-2$ vertices of degree three, and $\frac{k\ell}{4}-\frac{k+\ell}{2}+1$ vertex of degree four. An example is depicted on \Cref{fig-g48}. It is always possible to select those vertices such that they induce an independent set, since there is an independent set containing half the vertices of $G_{k,\ell}$, and in particular half the corners, half of the sides and half of the inside. By applying \Cref{prop-degreeHalfEdges}, $G_{k,\ell}$ is balanceable.
		\ \\
		
		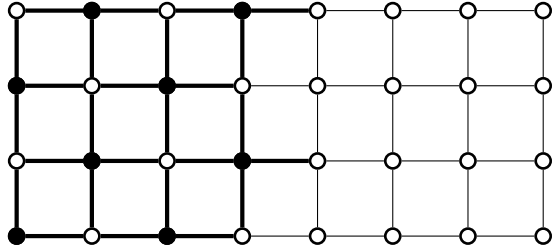
\begin{figure}[h]
			\centering
			\begin{tikzpicture}
				\foreach \I in {0,1,2,3} {
					\foreach \J in {0,1,...,7} {
						\node[noeud] (u\I\J) at (\J,\I) {};
					}
				}
				\foreach \I in {0,1,2,3} {
					\foreach[evaluate=\J as \K using {int(\J+1)}] \J in {0,...,6} {
						\draw (u\I\J) to (u\I\K);
					}
				}
				\foreach \J in {0,...,7} {
					\foreach[evaluate=\I as \K using {int(\I+1)}] \I in {0,1,2} {
						\draw (u\I\J) to (u\K\J);
					}
				}
				
				\foreach \I/\J in {0/0,2/0,1/1,3/1,2/2,1/3,3/3,0/2} {
					\node[noeud,fill=black] (u\I\J) at (\J,\I) {};
				}
				\foreach \I/\J in {00/01,00/10,20/10,20/21,20/30,11/01,11/10,11/21,11/12,31/30,31/21,31/32,22/21,22/12,22/32,22/23,13/12,13/03,13/23,13/14,33/32,33/23,33/34,02/01,02/12,02/03} {
					\draw[ultra thick] (u\I)to(u\J);
				}
			\end{tikzpicture}
			\caption{The independent set $I$ such that $\sum_{v \in I} d(v) = \frac{|E(G)|}{2}$ for $G_{4,8}$. Vertices in $I$ are bolded, as well as the out-edges of $I$.}
			\label{fig-g48}
		\end{figure}
		
		\noindent\textbf{Case 2:} If $k$ and $\ell$ are odd, and $k+\ell$ is not a multiple of~4, then we can select 1 vertex of degree two, $\frac{k+\ell}{2}-3$ vertices of degree three, and $\frac{k\ell+7}{4}-\frac{k+\ell}{2}$ vertices of degree four. An example is depicted on \Cref{fig-g37}. Again, it is always possible to select those vertices such that they induce an independent set (by the same argument than the previous case).
		Furthermore, $k\ell+7$ is a multiple of~4: by noting $k=2a+1$ and $\ell=2b+1$, we have $k\ell+7=4ab+2a+2b+1+7=4ab+8+(2a+2b)=4ab+8+(k+\ell-2)$, and the fact that $k+\ell$ is not a multiple of~4 implies that $(k+\ell-2)$ is.
		We will thus have:
		\begin{eqnarray*}
			\sum_{v \in I} d(v) & = & 2 + 3\left(\frac{k+\ell}{2}-3\right) + 4 \left( \frac{k\ell+7}{4} - \frac{k+\ell}{2} \right) \\
			& = & 2 + 3\frac{k+\ell}{2} - 9 + k\ell + 7 - 4\frac{k+\ell}{2} \\
			& = & k\ell - \frac{k+\ell}{2} \\
			& = & \frac{|E|}{2}
		\end{eqnarray*}
	
		\Cref{prop-degreeHalfEdges} then implies that $G_{k,\ell}$ is balanceable.
		
		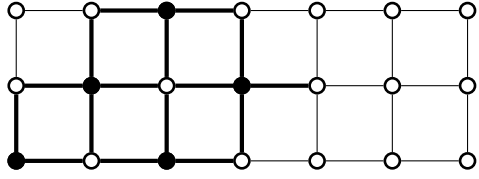
\begin{figure}[h]
			\centering
			\begin{tikzpicture}
				\foreach \I in {0,1,2} {
					\foreach \J in {0,1,...,6} {
						\node[noeud] (u\I\J) at (\J,\I) {};
					}
				}
				\foreach \I in {0,1,2} {
					\foreach[evaluate=\J as \K using {int(\J+1)}] \J in {0,...,5} {
						\draw (u\I\J) to (u\I\K);
					}
				}
				\foreach \J in {0,...,6} {
					\foreach[evaluate=\I as \K using {int(\I+1)}] \I in {0,1} {
						\draw (u\I\J) to (u\K\J);
					}
				}
				
				\foreach \I/\J in {0/0,1/1,2/2,1/3,0/2} {
					\node[noeud,fill=black] (u\I\J) at (\J,\I) {};
				}
				\foreach \I/\J in {00/01,00/10,11/01,11/10,11/21,11/12,22/21,22/12,22/23,13/12,13/03,13/23,13/14,02/01,02/12,02/03} {
					\draw[ultra thick] (u\I)to(u\J);
				}
			\end{tikzpicture}
			\caption{The independent set $I$ such that $\sum_{v \in I} d(v) = \frac{|E(G)|}{2}$ for $G_{3,7}$. Vertices in $I$ are bolded, as well as the out-edges of $I$.}
			\label{fig-g37}
		\end{figure}
		
		\noindent\textbf{Case 3:} If $k$ and $\ell$ are odd, and $k+\ell$ is a multiple of~4, then we can select 2 vertices of degree two, $\frac{k+\ell}{2}-3$ vertices of degree three, and $\frac{k\ell+5}{4}-\frac{k+\ell}{2}$ vertices of degree four. An example is depicted on \Cref{fig-g57}. Again, it is always possible to select those vertices such that they induce an independent set (by the same argument than the previous case).
		Furthermore, $k\ell+5$ is a multiple of~4: by noting $k=2a+1$ and $\ell=2b+1$, we have $k\ell+5=4ab+2a+2b+1+5=4ab+(2a+2b+6)=4ab+(k+\ell+4)$.
		We will thus have:
		\begin{eqnarray*}
			\sum_{v \in I} d(v) & = & 4 + 3\left(\frac{k+\ell}{2}-3\right) + 4 \left( \frac{k\ell+5}{4} - \frac{k+\ell}{2} \right) \\
			& = & 4 + 3\frac{k+\ell}{2} - 9 + k\ell + 5 - 4\frac{k+\ell}{2} \\
			& = & k\ell - \frac{k+\ell}{2} \\
			& = & \frac{|E|}{2}
		\end{eqnarray*}
		\Cref{prop-degreeHalfEdges} then implies that $G_{k,\ell}$ is balanceable.
		
		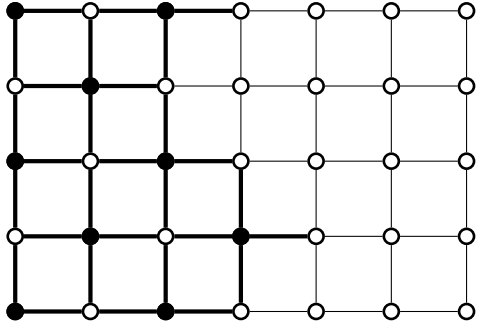
\begin{figure}[h]
			\centering
			\begin{tikzpicture}
				\foreach \I in {0,1,2,3,4} {
					\foreach \J in {0,1,...,6} {
						\node[noeud] (u\I\J) at (\J,\I) {};
					}
				}
				\foreach \I in {0,1,2,3,4} {
					\foreach[evaluate=\J as \K using {int(\J+1)}] \J in {0,...,5} {
						\draw (u\I\J) to (u\I\K);
					}
				}
				\foreach \J in {0,...,6} {
					\foreach[evaluate=\I as \K using {int(\I+1)}] \I in {0,1,2,3} {
						\draw (u\I\J) to (u\K\J);
					}
				}
				
				\foreach \I/\J in {0/0,1/1,2/2,1/3,0/2,2/0,3/1,4/0,4/2} {
					\node[noeud,fill=black] (u\I\J) at (\J,\I) {};
				}
				\foreach \I/\J in {00/01,00/10,11/01,11/10,11/21,11/12,22/21,22/12,22/32,22/23,13/12,13/03,13/23,13/14,02/01,02/12,02/03,20/10,20/21,20/30,31/30,31/21,31/41,31/32,40/30,40/41,42/41,42/32,42/43} {
					\draw[ultra thick] (u\I)to(u\J);
				}
			\end{tikzpicture}
			\caption{The independent set $I$ such that $\sum_{v \in I} d(v) = \frac{|E(G)|}{2}$ for $G_{5,7}$. Vertices in $I$ are bolded, as well as the out-edges of $I$.}
			\label{fig-g57}
		\end{figure}
		
		All possible cases have been covered, and thus if $k$ and $\ell$ have the same parity, then the rectangular grid $G_{k,\ell}$ is balanceable.
	\end{proof}
	
	\subsection{Triangular grids}
	\label{subsec-triangularGrids}
	
	Let $T_h$ be the (equilateral) triangular grid with $h$ vertices on each side. It is easy to see that $T_h$ has $\frac{3(h-1)h}{2}$ edges, and that this is even if and only if $h \bmod 8 \in \{0,1,4,5\}$. We will prove that some triangular grids are not balanceable, while others are.
	
	\begin{theorem}
		\label{thm-triangularGrids}
		Let $h$ be a positive integer such that $h \bmod 8 \in \{0,1,4,5\}$.
		The triangular grid $T_h$ is balanceable if and only if $h \bmod 8 \in \{0,1\}$.
	\end{theorem}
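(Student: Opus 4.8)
The statement is an equivalence that splits according to the residue of $h$ modulo $8$, and the two implications call for completely different tools. The plan is to dispatch the non-balanceable cases $h \bmod 8 \in \{4,5\}$ first, since they follow immediately from the machinery already in place. Indeed, every vertex of $T_h$ has even degree---the three corners have degree~$2$, the remaining boundary vertices degree~$4$, and the interior vertices degree~$6$---so $T_h$ is eulerian. Writing $h=8m+c$, a direct computation shows that $\frac{|E|}{2}=\frac{3(h-1)h}{4}$ is an odd integer for $c\in\{4,5\}$ and an even integer for $c\in\{0,1\}$. Hence for $h \bmod 8 \in \{4,5\}$, \Cref{prop-halfEdgesOddAndEulerian} applies verbatim and shows that $T_h$ is not balanceable.

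For the balanceable cases $h \bmod 8 \in \{0,1\}$ I would aim to invoke \Cref{prop-degreeHalfEdges}, so the entire problem reduces to exhibiting an independent set $I$ with $\sum_{v\in I}d(v)=\frac{|E|}{2}=\frac{3(h-1)h}{4}$. The enabling arithmetic fact is that among the consecutive integers $h-1,h$ exactly one is even, whence $8\mid (h-1)h$ precisely when $h\equiv 0,1\pmod 8$; consequently $M:=\frac{3(h-1)h}{8}$ is a positive integer, and building $I$ amounts to choosing nonnegative integers $n_2,n_4,n_6$ with
\[
n_2+2n_4+3n_6=M
\]
together with an independent set containing exactly $n_d$ vertices of degree $d$ for each $d\in\{2,4,6\}$. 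A census by degree gives $3$ corners, $3(h-2)$ side vertices, and $\frac{(h-2)(h-3)}{2}$ interior vertices.

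To produce such an $I$, the plan is to begin from a proper $3$-colouring of $T_h$ (for instance $c(i,j)=(i+j)\bmod 3$ in the standard triangular coordinates), each colour class being an independent set. The largest colour class $C$ has degree sum at least the average $\frac{2|E|}{3}=(h-1)h$, which strictly exceeds $\frac{|E|}{2}=\frac{3(h-1)h}{4}$; since deleting vertices from an independent set preserves independence, it then suffices to remove from $C$ a sub-collection of vertices whose degrees sum to the even gap $\deg(C)-\frac{|E|}{2}$. Because $C$ contains $\Theta(h^2)$ interior (degree-$6$) vertices together with $\Theta(h)$ degree-$4$ vertices and at least one corner, every even value in $[0,\deg(C)]$ is realizable as a subset sum of its degrees---use degree-$6$ vertices for the bulk and a degree-$2$ or degree-$4$ vertex to correct the residue---and in particular the gap is. Applying \Cref{prop-degreeHalfEdges} to the resulting independent set then gives balanceability.

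The main obstacle is the bookkeeping hidden in that last step: when $h\not\equiv 0\pmod 3$ the three colour classes are not equinumerous, so the exact number of vertices of each degree in $C$ depends on $h\bmod 3$ and on how the colouring meets the boundary, and one must verify in each residue class that enough degree-$2$ and degree-$4$ vertices remain to correct the gap's residue while still leaving the required number of degree-$6$ vertices. I expect this to be routine but tedious for large $h$, with the genuinely special small cases (essentially $h=1$, trivial, together with $h\in\{8,9\}$) settled by an explicit choice of $I$; for instance when $h=8$ one may take $(n_2,n_4,n_6)=(0,3,5)$, which gives degree sum $42=\frac{|E|}{2}$.
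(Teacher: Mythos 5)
Your treatment of the non-balanceable cases $h\equiv 4,5\pmod 8$ is exactly the paper's: $T_h$ is eulerian and $\frac{|E|}{2}$ is odd, so \Cref{prop-halfEdgesOddAndEulerian} applies. For the balanceable cases, both you and the paper reduce to \Cref{prop-degreeHalfEdges}, i.e.\ to producing an independent set whose degrees sum to $\frac{|E|}{2}$, but you build it differently: the paper gives an explicit row-by-row construction (the apex, then alternating ``B-sets'' and ``A-sets'' of every other vertex on every other row, with an ad hoc removal of $k$ degree-6 vertices when $h=8k+1$) and checks the degree sum by two closed-form computations, whereas you start from a colour class $C$ of a proper $3$-colouring --- whose degree sum is at least $\frac{2|E|}{3}=h(h-1)>\frac{3h(h-1)}{4}=\frac{|E|}{2}$ --- and trim vertices whose degrees realize the even excess $g:=\deg(C)-\frac{|E|}{2}$ as a subset sum of $\{2,4,6\}$. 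Your route is conceptually cleaner and less coordinate-heavy, at the cost of the subset-sum bookkeeping you defer; that bookkeeping does go through (for large $h$ the class contains far more than $g/6$ degree-$6$ vertices and $\Theta(h)$ degree-$4$ boundary vertices, so the greedy ``sixes plus a small correction'' works), but one detail in your sketch is wrong as stated: when $h\equiv 1\pmod 3$ the three corners all receive the same colour under $(i+j)\bmod 3$, so the largest class may contain no degree-$2$ vertex at all. This is harmless --- a residue of $2\pmod 6$ can be corrected with two degree-$4$ vertices instead of one corner --- but it is precisely the kind of case your ``routine but tedious'' step must catch, so what you have is a correct and genuinely different plan for the forward direction rather than a finished proof of it.
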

	
	\begin{proof}
		We prove two statements here: the non-balanceability of $T_{8k+4}$ and $T_{8k+5}$; as well as the balanceability of $T_{8k}$ and $T_{8k+1}$.
		
		We will consider the vertices of $T_h$ row by row, starting from a single vertex at the top of the grid. The vertex $u_i^j$ will be the $i$th vertex (starting from the left) in the $j$th row (starting from the top), so the top vertex is $u_1^1$, the second row contains $u_1^2$ and $u_2^2$, and so on. Note that the three corner vertices have degree~2, the vertices on the sides of the grid have degree~4, and the vertices in the middle have degree~6; thus $T_h$ is eulerian.
		
		First, assume that $h=8k+4$. Then, $\frac{|E(G)|}{2}=\frac{3(8k+3)(8k+4)}{4}=48k^2+42k+9$ and thus is odd. Since $T_h$ is eulerian, \Cref{prop-halfEdgesOddAndEulerian} implies that it is not balanceable. The reasoning is the same with $h=8k+5$, with $\frac{|E(G)|}{2}=48k^2+54k+15$.
		
		Now, assume that $h \in \{8k,8k+1\}$. We will prove that there is an independent set $I$ such that $\sum_{v \in I}d(v)=\frac{|E(G)|}{2}$, and apply \Cref{prop-degreeHalfEdges} to complete the proof. We define, for every row except the first, second and last ones, two kinds of independent sets: we call \emph{A-set} of the $i$th row the independent set containing all vertices $u_{2j+1}^i$ for $j \geq 0$; and we call \emph{B-set} of the $i$th row the independent set containing all vertices $u_{2j}^i$ for $j \geq 1$. Note that, if $i$ is odd, then the A-set of the $i$th row contains two vertices of degree~4 and $\frac{i-3}{2}$ vertices of degree~6; and the B-set of the $i$th row contains $\frac{i-1}{2}$ vertices of degree~6. In the following, we will call degree of an A-set (resp. B-set) the sum of the degrees of the vertices it contains.
		\ \\
		
		\noindent\textbf{Case 1:} $h=8k$. Note that in this case, $\frac{|E(G)|}{2}=48k^2-6k$.
		We take the following vertices in $I$:
		\begin{enumerate}
			\item $u_1^1$:
			\item B-sets on rows $3+2i$ for $i \in \{0,\ldots,k-1\}$;
			\item A-sets on rows $3+2k,3+2k+2,\ldots,h-1$.
		\end{enumerate}
		This is depicted on the left-hand side of \Cref{fig-T9}.
		
		Thus, $I$ contains $u_1^1$ which has degree~2, $k$ B-sets which have degree $6(i+1)$ for $i \in \{0,\ldots,k-1\}$, and $3k-1$ A-sets which have degree $8+6i$ for $i \in \{k,\ldots,4k-2\}$. Thus, we have:
		
		\begin{eqnarray*}
			\sum_{v \in I} d(v) & = & 2 + \sum_{i=0}^{k-1} 6(i+1) + \sum_{i=k}^{4k-2} (8+6i) \\
			& = & 2 + \frac{6k(k+1)}{2} + 8(4k-2) + \frac{6(4k-2)(4k-1)}{2} - 8(k-1) - \frac{6(k-1)k}{2} \\
			& = & 48k^2 - 6k
		\end{eqnarray*}
		
		\noindent\textbf{Case 2:} $h=8k+1$. Note that in this case, $\frac{|E(G)|}{2}=48k^2+6k$. If $h=1$ then the result trivially holds since $G$ is the trivial graph. Otherwise,
		we take the following vertices in $I$:
		\begin{enumerate}
			\item $u_1^1$;
			\item A-sets on rows $3,5,\ldots,h-2$, from which we remove $k$ vertices of degree~6 (this is always possible since those A-sets will contain $\sum_{i=0}^{4k-2}i=8k^2-6k+1$ vertices of degree~6, and $k \geq 1 \Rightarrow 8k^2-6k+1>k$);
			\item $u_1^h,u_3^h,\ldots,u_h^h$.
		\end{enumerate}
		This is depicted on the right-hand side of \Cref{fig-T9}.
		
		Thus, $I$ contains $u_1^1$ which has degree~2, $4k-1$ A-sets which have degree $8+6i$ for $i \in \{0,\ldots,4k-2\}$, from which we remove $k$ vertices of degree~6 thus removing $6k$, and the vertices selected on the last row ($4k-1$ of degree~4 and two of degree~2). Thus, we have:
		
		\begin{eqnarray*}
			\sum_{v \in I} d(v) & = & 2 + \sum_{i=0}^{4k-2} (8+6i) - 6k + 4(k-1) + 2+2 \\
			& = & 2 + 8(4k-1) + \frac{6(4k-2)(4k-1)}{2} - 6k + 16k \\
			& = & 48k^2 + 6k
		\end{eqnarray*}
		
		Thus, we have proved that if $h \bmod 8 \in \{0,1\}$, then $T_h$ is balanceable; and that if $h \bmod 8 \in \{4,5\}$, then $T_h$ is not balanceable. This completes the proof of \Cref{thm-triangularGrids}.
	\end{proof}
	
	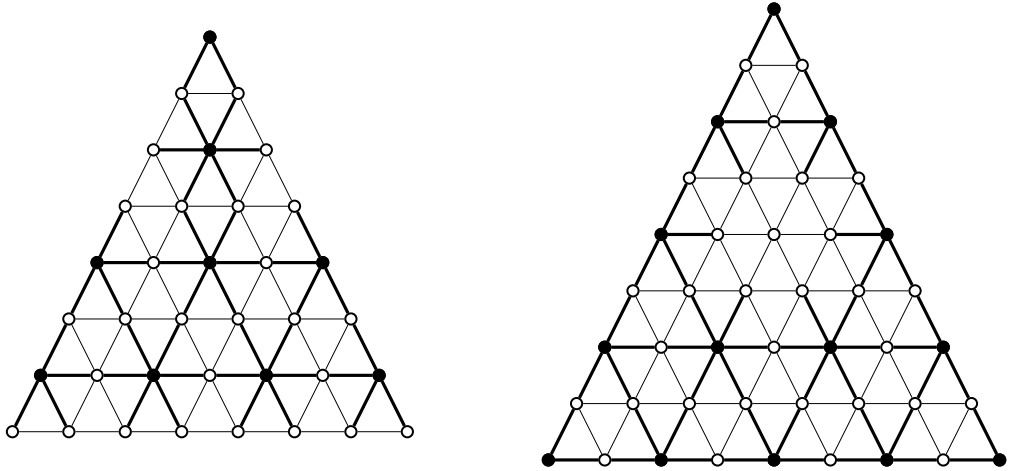
\begin{figure}[h]
		\centering
		\scalebox{0.75}{
			\begin{tikzpicture}
			
			\node (t8) at (0,0) {
				\begin{tikzpicture}
				\foreach \I in {0,...,7} {\node[noeud] (u8\I) at (\I+0.5,1) {};}
				\foreach \I in {0,...,6} {\node[noeud] (u7\I) at (\I+1,2) {};}
				\foreach \I in {0,...,5} {\node[noeud] (u6\I) at (\I+1.5,3) {};}
				\foreach \I in {0,...,4} {\node[noeud] (u5\I) at (\I+2,4) {};}
				\foreach \I in {0,...,3} {\node[noeud] (u4\I) at (\I+2.5,5) {};}
				\foreach \I in {0,...,2} {\node[noeud] (u3\I) at (\I+3,6) {};}
				\foreach \I in {0,...,1} {\node[noeud] (u2\I) at (\I+3.5,7) {};}
				\foreach \I in {0,...,0} {\node[noeud] (u1\I) at (\I+4,8) {};}
				
				\foreach \I/\J in {4/8,4/6,2/4,4/4,6/4,1/2,3/2,5/2,7/2} {
					\draw (\I,\J) node[noeud,fill=black] {};
				}
				
				\foreach[evaluate=\I as \J using {int(\I+1)}] \I in {0,...,6} {\draw (u8\I)--(u8\J);}
				\foreach[evaluate=\I as \J using {int(\I+1)}] \I in {0,...,5} {\draw (u7\I)--(u7\J);}
				\foreach[evaluate=\I as \J using {int(\I+1)}] \I in {0,...,4} {\draw (u6\I)--(u6\J);}
				\foreach[evaluate=\I as \J using {int(\I+1)}] \I in {0,...,3} {\draw (u5\I)--(u5\J);}
				\foreach[evaluate=\I as \J using {int(\I+1)}] \I in {0,...,2} {\draw (u4\I)--(u4\J);}
				\foreach[evaluate=\I as \J using {int(\I+1)}] \I in {0,...,1} {\draw (u3\I)--(u3\J);}
				\draw (u20)--(u21);
				
				\foreach[evaluate=\I as \J using {int(\I+2)}] \I in {0,...,6} {
					\foreach[evaluate=\K as \M using {int(\K-1)}] \K in {\J,...,8}
					{\draw (u\K\I)--(u\M\I);}
				}
				
				\foreach[evaluate=\I as \Q using {int(\I-1)},evaluate=\I as \M using {int(\I+1)}] \I in {1,...,7} {
					\foreach[evaluate=\J as \N using {int(\J+1)}] \J in {0,...,\Q} {
						\draw (u\I\J)--(u\M\N);
					}
				}
				
				\foreach \I/\J in {10/20,10/21,31/20,31/21,31/30,31/32,31/41,31/42,50/40,50/51,50/60,50/61,52/41,52/42,52/51,52/53,52/62,52/63,54/43,54/53,54/64,54/65,70/60,70/71,70/80,70/81,72/61,72/62,72/71,72/73,72/82,72/83,74/63,74/64,74/73,74/75,74/84,74/85,76/65,76/75,76/86,76/87} {\draw[ultra thick] (u\I)--(u\J);}
				\end{tikzpicture}
			};
			
			\node (t9) at (10,0) {
				\begin{tikzpicture}
				\foreach \I in {0,...,8} {\node[noeud] (u9\I) at (\I,0) {};}
				\foreach \I in {0,...,7} {\node[noeud] (u8\I) at (\I+0.5,1) {};}
				\foreach \I in {0,...,6} {\node[noeud] (u7\I) at (\I+1,2) {};}
				\foreach \I in {0,...,5} {\node[noeud] (u6\I) at (\I+1.5,3) {};}
				\foreach \I in {0,...,4} {\node[noeud] (u5\I) at (\I+2,4) {};}
				\foreach \I in {0,...,3} {\node[noeud] (u4\I) at (\I+2.5,5) {};}
				\foreach \I in {0,...,2} {\node[noeud] (u3\I) at (\I+3,6) {};}
				\foreach \I in {0,...,1} {\node[noeud] (u2\I) at (\I+3.5,7) {};}
				\foreach \I in {0,...,0} {\node[noeud] (u1\I) at (\I+4,8) {};}
				
				\foreach \I/\J in {4/8,3/6,5/6,2/4,6/4,1/2,3/2,5/2,7/2,0/0,2/0,4/0,6/0,8/0} {
					\draw (\I,\J) node[noeud,fill=black] {};
				}
				
				\foreach[evaluate=\I as \J using {int(\I+1)}] \I in {0,...,7} {\draw (u9\I)--(u9\J);}
				\foreach[evaluate=\I as \J using {int(\I+1)}] \I in {0,...,6} {\draw (u8\I)--(u8\J);}
				\foreach[evaluate=\I as \J using {int(\I+1)}] \I in {0,...,5} {\draw (u7\I)--(u7\J);}
				\foreach[evaluate=\I as \J using {int(\I+1)}] \I in {0,...,4} {\draw (u6\I)--(u6\J);}
				\foreach[evaluate=\I as \J using {int(\I+1)}] \I in {0,...,3} {\draw (u5\I)--(u5\J);}
				\foreach[evaluate=\I as \J using {int(\I+1)}] \I in {0,...,2} {\draw (u4\I)--(u4\J);}
				\foreach[evaluate=\I as \J using {int(\I+1)}] \I in {0,...,1} {\draw (u3\I)--(u3\J);}
				\draw (u20)--(u21);
				
				\foreach[evaluate=\I as \J using {int(\I+2)}] \I in {0,...,7} {
					\foreach[evaluate=\K as \M using {int(\K-1)}] \K in {\J,...,9}
					{\draw (u\K\I)--(u\M\I);}
				}
				
				\foreach[evaluate=\I as \Q using {int(\I-1)},evaluate=\I as \M using {int(\I+1)}] \I in {1,...,8} {
					\foreach[evaluate=\J as \N using {int(\J+1)}] \J in {0,...,\Q} {
						\draw (u\I\J)--(u\M\N);
					}
				}
				
				\foreach \I/\J in {10/20,10/21,30/20,30/31,30/40,30/41,32/21,32/31,32/42,32/43,50/40,50/51,50/60,50/61,54/43,54/53,54/64,54/65,70/60,70/71,70/80,70/81,72/61,72/62,72/71,72/73,72/82,72/83,74/63,74/64,74/73,74/75,74/84,74/85,76/65,76/75,76/86,76/87,90/80,90/91,92/81,92/82,92/91,92/93,94/83,94/84,94/93,94/95,96/85,96/86,96/95,96/97,98/87,98/97} {\draw[ultra thick] (u\I)--(u\J);}
				\end{tikzpicture}
			};
			
		\end{tikzpicture}
	}
	\caption{The independent set $I$ such that $\sum_{v \in I}d(v)=\frac{|E(G)|}{2}$ for $T_8$ (on the left) and $T_9$ (on the right). Vertices in $I$ are bolded, as well as the out-edges of $I$.}
	\label{fig-T9}
\end{figure}

\section{Conclusion}

In this paper, we extended the study of balanceable graphs initiated in~\cite{CHM19, CHM20}, and used the characterization of balanceable graphs given by \Cref{thm-characterization} to state weaker but more practical conditions for balanceability and non-balanceability. However, the question of how hard is the characterization of \Cref{thm-characterization} remains open. Note that this theorem states that a graph is balanceable if and only if it contains both a cut crossed by half of its edges and an induced subgraph containing half of its edges. In particular, the problem of deciding whether a graph has a cut crossed by exactly half of its edges is a variant on the problem \textsc{Simple-Exact-Cut} (which asks whether an unweighted graph contains a cut crossed by exactly $k$ edges), which is NP-complete:

\begin{prop}
	\label{prop-exactCut}
	Let $G$ be an unweighted graph and $k$ a positive integer. The problem of deciding whether $G$ contains a cut crossed by exactly $k$ edges is NP-complete.
\end{prop}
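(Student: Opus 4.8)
The plan is to establish membership in NP and then prove NP-hardness by a reduction from \textsc{Simple-Max-Cut} (given an unweighted graph $G$ and an integer $t$, does $G$ admit a cut crossing at least $t$ edges?), which is well known to be NP-complete. Membership in NP is immediate: a partition $(X,V \setminus X)$ serves as a certificate, and the quantity $e(X,V \setminus X)$ can be counted and compared to $k$ in polynomial time.

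For the hardness direction, the central difficulty is that the set of cut sizes realized by a graph need not be an interval: a triangle, for instance, realizes only the cut sizes $0$ and $2$. Hence a query of the form ``is there a cut of size exactly $k$?'' cannot directly detect ``is there a cut of size at least $t$?''. I would remove this obstruction with a padding gadget. Given a \textsc{Simple-Max-Cut} instance $(G,t)$ with $m=|E(G)|$, I form $G'$ as the disjoint union of $G$ with a matching $M$ of $m$ independent edges on $2m$ fresh vertices. Since $M$ is disconnected from $G$ and each of its edges can be independently crossed or not by a cut, the set of cut sizes of $G'$ is exactly the sumset of the set of cut sizes of $G$ with $\{0,1,\ldots,m\}$.

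The key step is then to show that this sumset equals the full integer interval $\{0,1,\ldots,C+m\}$, where $C=\mathrm{maxcut}(G)$. The empty cut and a maximum cut witness that $0$ and $C$ are both realizable cut sizes of $G$, so the sumset contains the integer intervals $[0,m]$ and $[C,C+m]$; since $C \le m$, these two intervals overlap and their union is precisely $\{0,\ldots,C+m\}$. As no cut of $G'$ can exceed $C+m$ or fall below $0$, the sumset equals $\{0,\ldots,C+m\}$. Consequently, $G'$ has a cut crossing exactly $k$ edges if and only if $0 \le k \le C+m$.

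Finally, I would set $k=t+m$. Then $G'$ admits a cut of exactly $k$ edges if and only if $t+m \le C+m$, that is, if and only if $\mathrm{maxcut}(G) \ge t$, which is exactly the answer to the original \textsc{Simple-Max-Cut} instance. Since $G'$ has $|V(G)|+2m$ vertices and $2m$ edges and $k$ is bounded polynomially, the construction is carried out in polynomial time, completing the reduction. I expect the only delicate point to be the interval computation of the previous paragraph, in particular the use of $C \le m$ to guarantee contiguity; the remaining verifications are routine.
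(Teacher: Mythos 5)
Your proof is correct and follows essentially the same strategy as the paper's: a reduction from \textsc{Simple-Max-Cut} that pads $G$ with a disjoint gadget whose cut sizes realize every value in $\{0,\ldots,m\}$ and then asks for a cut of exactly $t+m$ edges. The only difference is the choice of gadget (a matching of $m$ edges rather than the star $K_{1,m}$ used in the paper), which changes nothing essential.
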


\begin{proof}
	The problem \textsc{Simple-Exact-Cut} is trivially in NP: a certificate will be a partition $(X,Y)$ and the verifier will simply count the edges with one endpoint in $X$ and the other in $Y$, which can be done in polynomial time.
	
	We use a reduction from \textsc{Simple-Max-Cut}, which asks whether an unweighted graph contains a cut crossed by at least $k$ edges, and was proved to be NP-complete by Garey, Johnson and Stockmeyer~\cite{GJS76}. Let $(G(V,E),k)$ be an instance of \textsc{Simple-Max-Cut}. We let $G'=G \cup K_{1,|E|}$ (that is, the disjoint union of $G$ and a star with $|E|$ leaves) and $k'=k+|E|$. The instance $(G',k')$ of \textsc{Simple-Exact-Cut} is equivalent to $(G,k)$ for \textsc{Simple-Max-Cut}. Indeed, if there is a cut $(X,Y)$ crossed by $\ell$ edges (with $\ell \geq k$) in $G$, then we can take the same cut in the $G$ component of $G'$, and add $k+|E|-\ell$ leaves of $K_{1,|E|}$ to $X$ (which is always possible since $k \leq \ell \leq |E|$) and any remaining leaves as well as the central vertex of the star to $Y$, obtaining a cut crossed by exactly $k+|E|$ edges. Conversely, if there is a cut crossed by exactly $k+|E|$ edges in $G'$, then at most $|E|$ of those edges can be found in the star, so at least $k$ are found in $G$, and we can use the same cut for \textsc{Simple-Max-Cut}.
\end{proof}

However, note that the restriction of \textsc{Simple-Max-Cut} to $k=\frac{|E|}{2}$ has a trivial answer since there is a polynomial algorithm giving a cut crossed by at least half the edges of a graph~\cite{KRY07}. Nonetheless, it seems that guaranteeing a cut crossed by exactly half of the edges would remain difficult. This is one of the two angles to tackle the problem of the computational complexity of balanceability by using the characterization given by \Cref{thm-characterization}, the other one being the existence of an induced subgraph containing exactly half the edges of the graph. Since both of those problems seem to be difficult on their own, and that there does not seem to be an easy way to conjugate them, we conjecture that the problem of balanceability is NP-complete.

\begin{conjecture}
	The problem of deciding whether a given graph is balanceable is NP-complete.
\end{conjecture}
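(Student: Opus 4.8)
The plan is to prove the two inclusions separately, with membership in NP being routine and NP-hardness being the substantial part. For membership, I would invoke \Cref{thm-characterization}: a graph $G(V,E)$ is balanceable precisely when there exist a partition $V = X \cup Y$ and a set $W \subseteq V$ with $e(X,Y), e(G[W]) \in \{\lfloor |E|/2 \rfloor, \lceil |E|/2 \rceil\}$. A certificate is thus the triple $(X,Y,W)$, and a verifier need only count $e(X,Y)$ and $e(G[W])$ and compare them against the two admissible values, all of which is polynomial in $|V|+|E|$. Hence the balanceability problem lies in NP.

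For NP-hardness I would attempt a reduction building on \Cref{prop-exactCut}. Writing \textsc{Half-Cut} for the problem of deciding whether $G$ has a cut crossed by exactly $\lfloor |E|/2 \rfloor$ or $\lceil |E|/2 \rceil$ edges, and \textsc{Half-Induced} for the analogous problem on induced subgraphs, \Cref{thm-characterization} expresses balanceability as the conjunction \textsc{Half-Cut} $\wedge$ \textsc{Half-Induced}. The natural strategy is to start from an instance $(H,k)$ of \textsc{Simple-Exact-Cut} and to attach gadgets to $H$ that (i) shift the total edge count so that the global target $\lfloor |E|/2 \rfloor$ lines up with the prescribed value $k$ inside the $H$-part, (ii) force \textsc{Half-Induced} to hold unconditionally, and (iii) leave \textsc{Half-Cut} equivalent to the existence of a cut of exactly $k$ edges in $H$. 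For (ii), a promising device is a long pendant path or a calibrated disjoint gadget, since including its vertices one at a time changes the number of induced edges in unit steps and lets one hit the target $\lfloor |E|/2 \rfloor$ by an intermediate-value argument.

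The hard part will be carrying out (i)--(iii) simultaneously, and this is exactly where I expect the obstruction to lie. Any gadget built from edges is flexible under both operations that the two conditions exploit: a $2$-coloring may place the gadget's edges inside a color class or across the cut in almost any proportion, and a choice of induced subgraph may include or exclude those edges almost freely. Consequently the same pendant-path gadget that makes \textsc{Half-Induced} trivially satisfiable tends to make \textsc{Half-Cut} trivially satisfiable as well, collapsing the reduction; and the two conditions are further coupled because they share the single target $\lfloor |E|/2 \rfloor$, so any edge-count padding used to encode $k$ perturbs both of them at once. The crux, therefore, is to design a gadget that rigidly pins one of the two conditions to the behaviour of $H$ while leaving the other unconditionally satisfiable, without the two interfering. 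Whether such a gadget exists is precisely the difficulty that motivates stating this as a conjecture rather than a theorem; a successful proof would most plausibly come either from such a one-sided-rigidity construction, or from a direct hardness proof for one of \textsc{Half-Cut} or \textsc{Half-Induced} restricted to a graph class on which the other condition is structurally guaranteed.
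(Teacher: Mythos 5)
The statement you are addressing is stated in the paper as a \emph{conjecture}, and the paper offers no proof of it: the authors only motivate it informally by proving \Cref{prop-exactCut} (NP-completeness of \textsc{Simple-Exact-Cut}) and observing that the two conditions of \Cref{thm-characterization} each appear hard and are awkward to combine. Your proposal does contain one genuinely correct and complete piece: membership in NP does follow from \Cref{thm-characterization}, since the triple $(X,Y,W)$ is a polynomial-size certificate that can be checked by counting edges. That observation is sound (and is implicit, though not spelled out, in the paper's discussion).

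The gap is that the NP-hardness half --- which is the entire content of the conjecture --- is not proved. You describe a reduction template from \textsc{Simple-Exact-Cut} and then, candidly, explain why every gadget you can think of fails: any padding that makes the induced-subgraph condition trivially satisfiable also tends to make the cut condition trivially satisfiable, and both conditions are coupled through the single target $\lfloor |E|/2\rfloor$. Identifying an obstruction is not the same as overcoming it; as written, no reduction is exhibited, no gadget is constructed, and no equivalence of instances is verified. Note also that the specific difficulty is even sharper than your sketch suggests: by the result of Khuller, Raghavachari and Young cited in the paper, a cut crossed by \emph{at least} half the edges always exists and is found greedily, so the hardness, if any, must come entirely from the exactness requirement at the value $\lfloor |E|/2\rfloor$ --- a restriction of \textsc{Simple-Exact-Cut} to a single, graph-dependent value of $k$, for which \Cref{prop-exactCut} gives no hardness at all (its reduction produces targets $k+|E|$ unrelated to half the edge count of $G'$). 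Your proposal therefore correctly reproduces the paper's motivation for believing the conjecture, but it does not prove the statement, and neither does the paper.
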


In this paper, we also studied the balanceability of a special class of circulant graphs as well as of the rectangular and triangular grids with an even number of edges, for which we fully characterized those that are balanceable and those that are not. An interesting open problem would be to further the classification of the circulant graphs between balanceable and non-balanceable. Other graph classes that could be interesting to study are cubic graphs, as well as not necessarily regular classes such as $k$-trees, outerplanar, or even planar graphs. 

\section*{Acknowledgements}

We would like to thank the anonymous referees for their excellent advice, as well as suggesting \Cref{prop-regularBipartite4n}. We also would like to thank BIRS-CMO for hosting the workshop Zero-Sum Ramsey Theory: Graphs, Sequences and More (19w5132),  where many fruitful discussions arose that contributed to a better understanding of these topics.

\end{document}